\documentclass[11pt, oneside]{article}   	% use "amsart" instead of "article" for AMSLaTeX format
\usepackage{geometry, amsmath,tikz,amsthm}                		% See geometry.pdf to learn the layout options. There are lots.
\geometry{letterpaper}                   		% ... or a4paper or a5paper or ... 
\usepackage{graphicx}				% Use pdf, png, jpg, or eps§ with pdflatex; use eps in DVI mode
								% TeX will automatically convert eps --> pdf in pdflatex		
\usepackage{amssymb}
\usepackage{enumitem}

\newcommand\partitionfr[1]{
	\coordinate (prev) at (0,0);
	\foreach \dir in {#1}{
		\draw[help lines, line width = .25mm] (prev) -- +(0,1) coordinate (prev);
		\draw[help lines, line width = .25mm] (prev)+(0,-1) grid +(\dir,0);
	};
}

\newcommand\dyckpathshade[3]{
	start point, size, Dyck word (size x 2 booleans)
	\fill[blue]  (#1) rectangle +(#2,#2);
	\fill[white]
	(#1)
	\foreach \dir in {#3}{
		\ifnum\dir=0
		-- ++(1,0)
		\else
		-- ++(0,1)
		\fi
	} |- (#1);
}
\newcommand\dyckpath[3]{

	\draw[help lines] (#1) grid +(#2,#2);
	\draw[dashed] (#1) -- +(#2,#2);
	\coordinate (prev) at (#1);
	\foreach \dir in {#3}{
		\ifnum\dir=0
		\coordinate (dep) at (1,0);
		\else
		\coordinate (dep) at (0,1);
		\fi
		\draw[line width=4pt,] (prev) -- ++(dep) coordinate (prev);
	};
}
\newcommand\dyckpathmn[4]{
	\draw[help lines] (#1) grid +(#2,#3);
	\coordinate (prev) at (#1);
	\foreach \dir in {#4}{
		\ifnum\dir=0
		\coordinate (dep) at (1,0);
		\else
		\coordinate (dep) at (0,1);
		\fi
		\draw[line width=4pt,black] (prev) -- ++(dep) coordinate (prev);
	};
}

%SetFonts

%SetFonts

\DeclareMathOperator{\SYT}{SYT}
\DeclareMathOperator{\bH}{{\bf H}}

\DeclareMathOperator{\comaj}{{comaj}}
\DeclareMathOperator{\Des}{{Des}}
\DeclareMathOperator{\des}{{des}}

\DeclareMathOperator{\Comaj}{{Comaj}}
\DeclareMathOperator{\SSYT}{{SSYT}}
\DeclareMathOperator{\Exp}{{Exp}}
\DeclareMathOperator{\bF}{{\bf F}}

\DeclareMathOperator{\frob}{\mathcal{F}}

\newtheorem{theorem}{Theorem}[section]
\newtheorem{proposition}[theorem]{Proposition}

\newtheorem{lemma}[theorem]{Lemma}
\newtheorem{corollary}[theorem]{Corollary}

\title{Kronecker powers of harmonics, polynomial rings, and generalized principal evaluations}
\author{Marino Romero}
%\date{}							% Activate to display a given date or no date

\begin{document} 
\maketitle
%\section{}
%\subsection{}
\begin{abstract}
Our main goal is to compute the decomposition of arbitrary Kronecker powers of the Harmonics of $S_n$. To do this, we give a new way
 of decomposing the character for the action of $S_n$ on polynomial rings with $k$ sets of $n$ variables. There are two aspects to this decomposition. The first is algebraic, in which formulas can be given for certain restrictions from $GL_n$ to $S_n$ occurring in Schur-Weyl duality. 
The second is combinatorial. We give a generalization of the $\comaj$ statistic on permutations which includes the $\comaj$ statistic on standard tableaux. This statistic allows us to write a generalized principal evaluation for Schur functions and Gessel Fundamental quasisymmetric functions. 
\end{abstract}

\section{Introduction}
 
Let $\bH_n = \mathbb{C}[x_1,\dots,x_n]/(p_1,\dots,p_n)$, where $p_i = x_1^i+\cdots + x_n^i$ is the $i^{\text{th}}$ power sum symmetric polynomial.  The power sum polynomials generate all invariants $I_n = \mathbb{C}[x_1,\dots, x_n]^{S_n}$ of the symmetric group \cite{Weyl}, and $\bH_n$ is referred to as the coinvariants of $S_n$. This quotient is isomorphic to the harmonics of the symmetric group, which as a set equals
$$
\left\{  P(\partial_{x_1},\dots, \partial_{x_n}) V_n : ~ P \in \mathbb{C}[x_1,\dots,x_n]     \right\}, 
$$
the linear span of derivatives of the Vandermonde determinant
$$
V_n = \prod_{i<j } x_i -x_j.
$$

Our main goal will be to compute the graded multiplicities of irreducible representations appearing in $\bH_n^{\otimes k}$.  By Chevalley's work \cite{Chevalley}, 
we have that the power sum polynomials are algebraically independent and $ \mathbb{C}[x_1,\dots,x_n] = \bH_n \otimes  I_n$. Therefore
$$
 \mathbb{C}[x_1,\dots,x_n]^{\otimes k} = \bH_n^{\otimes k}  \otimes I_n^{\otimes k}.
$$
In order to decompose $\bH_n^{\otimes k}$ into irreducible representations, we give a new way of computing the multigraded multiplicities in $\mathbb{C}[x_1,\dots,x_n]^{\otimes k}$, which we view as the polynomial ring $\mathbb{C}[X^k_n]$ with $k$ sets of variables
$$
X^k_n = \{ {}_1x_1,\dots,  {}_1x_n, {}_2x_1,\dots, {}_2x_n, \dots, {}_kx_1,\dots, {}_kx_n\}.
$$
Orellana and Zabrocki \cite{OrellanaZabrocki} have recently done this (with the inclusion of anti-commuting variables) by viewing 
$$
\mathbb{C}[X^k_n] \simeq  \bigoplus_{\ell (\lambda) \leq \min(k,n)} W^{\lambda}_k \otimes W^{\lambda}_n
$$
 under the duality from the action of $GL_k \times GL_n$ \cite{GoodmanWallach}. Restricting to $S_n$, they are then able to compute the multiplicites of irreducible representations. 
\\

  Any $S_n$-module $M$ with character $\chi^M$ can be decomposed into irreducible representations, giving
  $$
  M= \bigoplus_{\lambda \vdash n} A_{\lambda}^{\oplus n_\lambda},
  $$
  where $n_\lambda$ is the multiplicity of the irreducible representation $A_\lambda$. 
  The connection between representations and symmetric functions in the variables $X = x_1+x_2 + \cdots $ can be encapsulated by the following identity:
 $$
 \bF(\chi^M) = \frac{1}{n!} \sum_{\sigma \in S_n } \chi^M(\sigma) p_{\lambda(\sigma)}[X] = \sum_{\lambda \vdash n} n_\lambda s_\lambda[X].
  $$
  In our notation, $\lambda(\sigma)$ is the partition giving the cycle type of $\sigma$, and 
  $$
s_\lambda[X] = \sum_{T\in SSYT(\lambda)} x^T  
  $$
is a Schur function. The plethystic evaluation $s_{\lambda}[x_1+x_2+\cdots ]$ is equivalent to the ordinary evaluation $s_{\lambda}(x_1,x_2,\dots).$
 
 Given a multigraded $S_n$-module $$M= \bigoplus_{a_1,a_2,\dots} M_{a_1,a_2,\dots},$$ the Schur expansion of the Frobenius characteristic
\begin{align*}
 \mathcal{F} M & = \sum_{a_1,a_2,\dots} \bF( \chi^{M_{a_1,a_2,\dots}}) q_1^{a_1} q_2^{a_2}\cdots  \\ & = \frac{1}{n!} \sum_{\sigma \in S_n}  p_{\lambda(\sigma)}[X] \sum_{a_1,a_2,\dots} \chi^{M_{a_1,a_2,\dots} }(\sigma) q_1^{a_1} q_2^{a_2} \cdots.
\end{align*}
 will give the multigraded multiplicities of irreducible representations in $M$. In the following section, we will do this for the ring of polynomials with $k$ sets of $n$ variables.\\

Our main goal will be to show that
\begin{theorem} \label{KroneckerHarmonics}
For any $n$, $k$ and $\lambda \vdash n$, we have
$$
\langle \mathcal{F} \bH_n^{\otimes k}, s_\lambda \rangle = \sum_{T \in \SYT(\lambda)} \sum_{\vec{\sigma} \in S_{n}^{\times k-1} }q^{\comaj_T(\vec{\sigma},\epsilon)}.
$$ 
\end{theorem}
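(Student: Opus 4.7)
The natural strategy is to exploit Chevalley's decomposition $\mathbb{C}[x_1,\dots,x_n] \cong \bH_n \otimes I_n$ separately in each of the $k$ sets of variables, which gives the diagonal $S_n$-module isomorphism $\mathbb{C}[X^k_n] \cong \bH_n^{\otimes k} \otimes I_n^{\otimes k}$. Under the diagonal $S_n$-action, the Frobenius characteristic of a tensor product is the internal (Kronecker) product of their Frobenius characteristics. Since $I_n^{\otimes k}$ is a trivial $S_n$-module whose multigraded Hilbert series is $\prod_{i=1}^{k}\prod_{j=1}^{n}(1-q_i^j)^{-1}$, and since $h_n$ is the identity for the Kronecker product, I get
$$\mathcal{F}\bH_n^{\otimes k} \;=\; \mathcal{F}\mathbb{C}[X^k_n]\cdot \prod_{i=1}^{k}\prod_{j=1}^{n}(1-q_i^j).$$
Pairing with $s_\lambda$, the theorem reduces to showing the multigraded Schur multiplicity
$$\langle \mathcal{F}\mathbb{C}[X^k_n], s_\lambda\rangle \;=\; \frac{1}{\prod_{i,j}(1-q_i^j)} \sum_{T \in \SYT(\lambda)} \sum_{\vec{\sigma}\in S_n^{\times k-1}} q^{\comaj_T(\vec{\sigma},\epsilon)},$$
so the whole problem passes to the polynomial ring, and the invariant factor is dispensed with mechanically.

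To establish this expression, I would lean on the generalized principal evaluation advertised in the abstract. The $k=1$ case is the Lusztig--Stanley fake-degree identity $\langle \mathcal{F}\bH_n, s_\lambda\rangle = \sum_{T\in\SYT(\lambda)} q^{\comaj(T)}$, which is exactly the classical principal specialization $s_\lambda[1+q+q^2+\cdots]$ with the invariant denominator stripped away. For general $k$, the point is that $\mathcal{F}\mathbb{C}[X^k_n]$ can be written as an iterated Cauchy-type product, one factor for each of the $k$ alphabets. Expanding the pairing with $s_\lambda$ using the Cauchy identity and the principal evaluation of Gessel fundamental quasisymmetric functions one alphabet at a time introduces a permutation $\sigma_i$ for each auxiliary alphabet, and the generalized statistic $\comaj_T(\vec{\sigma},\epsilon)$ is precisely what accumulates when one records these $k-1$ quasisymmetric contributions on top of the single Schur principal evaluation indexed by $T$.

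The natural organization is therefore induction on $k$: assuming the formula for $k-1$, fold in the Kronecker product with one additional copy of $\mathcal{F}\mathbb{C}[x_1,\dots,x_n]$ and apply the Cauchy identity with the fundamental quasisymmetric principal specialization
$$\bF_D[1+q+q^2+\cdots] \;=\; \frac{q^{\comaj_D}}{\prod_{i=1}^n(1-q^i)},$$
verifying that the resulting descent-set bookkeeping matches the inductive definition of $\comaj_T(\vec{\sigma},\epsilon)$ once a new permutation is appended.

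The main obstacle is precisely this inductive step: showing that the generalized $\comaj$ statistic compounds correctly under iterated principal evaluation, so that no overcounting or sign cancellation spoils the bijective correspondence between tuples $(T,\vec{\sigma})$ and monomials in the expansion. In particular, one must check that the descent frame used at stage $i$ is compatible with the standardization convention built into $\comaj_T$, and that the classical $k=1$ identity is recovered when $\vec{\sigma}$ is empty. Once that combinatorial identity is in hand, Theorem \ref{KroneckerHarmonics} follows immediately by clearing the $I_n^{\otimes k}$ denominator.
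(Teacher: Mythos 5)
Your reduction is sound and is essentially the one the paper uses: the Chevalley factorization $\mathbb{C}[X^k_n]\cong \bH_n^{\otimes k}\otimes I_n^{\otimes k}$, the triviality of $I_n^{\otimes k}$, and the fact that $h_n$ is the Kronecker identity together give $\mathcal{F}\bH_n^{\otimes k} = \prod_{i,j}(1-q_i^j)\,\mathcal{F}\mathbb{C}[X^k_n]$, and the iterated Kronecker product $h_n[XY]*h_n[XZ]=h_n[XYZ]$ plus the Cauchy identity identify $\langle \mathcal{F}\mathbb{C}[X^k_n],s_\lambda\rangle$ with the plethystic evaluation $s_\lambda[1/((1-q_1)\cdots(1-q_k))]$. (The paper reaches the same point by computing the fixed-monomial generating function of each cycle type directly; the two routes are equivalent.) Up to setting all $q_i=q$, this correctly reduces the theorem to the generalized principal evaluation, i.e.\ to Theorem \ref{finiteevaluation}.

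The gap is that you then assume that evaluation rather than prove it, and the step you defer --- ``showing that the generalized $\comaj$ statistic compounds correctly under iterated principal evaluation'' --- is the entire content of the theorem; without it one does not even know that $\comaj_T(\vec{\sigma},\epsilon)$ is the right statistic. Your proposed induction on $k$ via $F_{n,D}[1+q+q^2+\cdots]=q^{\comaj(D)}/(q;q)_n$ does not iterate in the naive way: after peeling one geometric alphabet off $F_{n,R}[1/((1-q_1)\cdots(1-q_k))]$, what remains is not a sum of fundamental quasisymmetric functions indexed by descent subsets of $\{1,\dots,n-1\}$, but a sum over pairs $(\sigma,D)$ of generating functions for lists of vectors whose reading word is $\sigma$ and whose vector-valued descent set is $D$, where ``descent'' must be measured against a reading order that depends on $R$ and on ties among the vectors. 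This is precisely what the paper's Proposition \ref{generalproposition} (the injections $\phi_{n-i}$ and the identification of the fixed-point set $C^1$) and the accompanying lemma (that the $\phi_{n-i}$ preserve the higher descent conditions, so the indicator factors can be carried through each successive application) are built to handle; the four-case analysis of when a $1$ can be removed from a leading coordinate is exactly where the $\sim_R$ conditions in $\Des_{R,S}$ come from. Until you supply an argument at that level of precision, your proposal is a correct reduction followed by a restatement of what must be shown.
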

The sums run over all standard tableaux $T$ of shape $\lambda$ and sequences $\vec{\sigma} = (\sigma^1,\dots,\sigma^{k-1}) $ of permutations $\sigma^i \in S_n$; moreover, we use $\epsilon$ to denote the identity element of $S_n$.
 This new statistic $\comaj_T$ generalizes $\comaj$ in the following ways: \begin{enumerate} \item
 When $T$ is a row of size $n$ and $\sigma \in S_n$,  
 $$\comaj_{T}(\sigma) = \comaj(\sigma) = \sum_{i \in \des(\sigma) } (n-i),
 $$
 where 
 $$
 \des(\sigma) = \{ i: \sigma_{i} > \sigma_{i+1} \}.
 $$
 For example, in this element of $S_9$, we have marked the positions of descents with a bullet point. 
 $$
\sigma = 4 {\overset{\bullet}{7}} 3 { \overset{\bullet}{9}} 1 2 5{ \overset{\bullet}{8}}6
 $$
 This means $\des(\sigma) = \{ 2,4,8\}$. To compute $\comaj(\sigma)$, we can think of labeling each $\sigma_i$, from left to right. We start by labeling $\sigma_1$ with $0$. After labeling $\sigma_i$, we move to $\sigma_{i+1}$. We increase our labelling by $1$ if $\sigma_i$ is marked as a descent. Otherwise, we keep the same label used on $\sigma_i$. The above example would then become
  $$
0 {\overset{\bullet}{0}} 1 { \overset{\bullet}{1}} 2 2 2{ \overset{\bullet}{2}}3
 $$
 It will be more relevant to instead fill the position $\sigma_i$ by the label we have constructed:
 $$
2 21 0 2 3{{0}} { {2}} {{1}}  
 $$
 We can then uniquely determine the permutation from this labeling.
 Summing the labels together gives $\comaj(\sigma) = 2 (1) + 4 (2) + 1 (3) = 13.$ 
 
 As a reference for these classical statistics, we have Stanley's book \cite{Stanley}.
 \\ \item
Let $\epsilon$ be the identity element in $S_n$. Then for any standard tableau $T \in \SYT(\lambda)$ we have
$$
\comaj_T(\epsilon) = \comaj (T) = \sum_{ i \in \des(T)} (n-i),
$$
where 
$$
\des(T) = \{i: \text{ $i+1$ is above or to the left of $i$ in $T$ } \}.
$$
 For instance, here we have a standard tableau of shape $(4,2,1)$.
 \begin{align*} 
\begin{tikzpicture}[scale= .5]
\partitionfr{4,2,1}\draw (-2,1) node {$T~=$};
\draw (.5,.5) node {$1$};
\draw (1.5,.5) node {$ 2 $};
\draw ( .5,1.5) node {$3 $};
\draw (2.5,.5) node {$4 $};
\draw (3.5,.5) node {$5$};
\draw (1.5,1.5) node {$6$};
\draw (.5,2.5) node {$7$};
\end{tikzpicture}
\end{align*}
Then 
$$
\des(T) = \{2,5,6\}.
$$
Using the same labeling strategy as above, we can replace $1$ by $0$. Then after relabeling $i$, we relabel $i+1$. If $ i$ is a descent, we increase the label by $1$. Otherwise, we use the same label as $i$. For the above example, we get
 \begin{align*} 
\begin{tikzpicture}[scale= .5]
\partitionfr{4,2,1};
\draw (.5,.5) node {$0$};
\draw (1.5,.5) node {$ 0$};
\draw ( .5,1.5) node {$1$};
\draw (2.5,.5) node {$1 $};
\draw (3.5,.5) node {$1$};
\draw (1.5,1.5) node {$2$};
\draw (.5,2.5) node {$3$};
\end{tikzpicture}
\end{align*}

Then $\comaj(T) = 11$ is the sum of the entries in this tableau. 
\end{enumerate}
 
 The proof of Theorem \ref{KroneckerHarmonics} relies on the following generalized principal evaluation for Schur functions. To simplify the expressions, we will use the P{o}chhammer symbol:
 $$(q;t)_r = (1-q)(1-qt)\cdots (1-qt^{r-1}).$$
 \begin{theorem}\label{finiteevaluation}
For $\lambda \vdash n$ and any $k$, we have
\begin{align*}
s_{\lambda}\left[ \frac{1}{(1-q_1)\cdots(1-q_k)} \right] = \frac{ \sum_{T\in \SYT(\lambda)} \sum_{\vec{\sigma} \in S_{n}^{\times k-1 } } q_1^{\comaj^1_T (\vec{\sigma})} \cdots q_k^{\comaj^k_T(\vec{\sigma}) }}{ (q_1;q_1)_n \cdots (q_k;q_k)_n}.
\end{align*}
 \end{theorem}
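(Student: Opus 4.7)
The plan is to reduce Theorem~\ref{finiteevaluation} to a generalized principal evaluation of Gessel's fundamental quasisymmetric functions, and then prove this via induction on $k$. Applying the Schur expansion $s_\lambda = \sum_{T \in \SYT(\lambda)} F_{\Des(T),n}$ inside the plethystic evaluation and using linearity of plethysm, the theorem follows once one establishes, for each $D \subseteq [n-1]$,
\begin{equation*}
F_{D,n}\!\left[\frac{1}{(1-q_1)\cdots(1-q_k)}\right] = \frac{\sum_{\vec{\sigma}\in S_n^{\times k-1}} q_1^{\comaj^1_D(\vec{\sigma})}\cdots q_k^{\comaj^k_D(\vec{\sigma})}}{(q_1;q_1)_n\cdots(q_k;q_k)_n},
\end{equation*}
where $\comaj^i_D$ denotes the specialization of $\comaj^i_T$ to any tableau $T$ with $\Des(T)=D$. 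Summing over $T \in \SYT(\lambda)$ then yields the stated identity.

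The base case $k=1$ is the classical stable principal specialization $F_{D,n}[1/(1-q)] = q^{\comaj(D)}/(q;q)_n$, obtained by substituting $x_j \mapsto q^{j-1}$ in the monomial expansion of $F_{D,n}$ as a sum over weakly increasing sequences $i_1 \leq \cdots \leq i_n$ with strict inequalities at positions in $D$. For the inductive step, I would interpret $F_{D,n}[1/\prod_i(1-q_i)]$ as the generating function for length-$n$ chains of monomials in $q_1,\ldots,q_k$ (weakly increasing in a chosen total order, strict at positions in $D$), each weighted by the product of its entries. Ordering the monomials lexicographically with $q_k$ as the most significant coordinate, every chain decomposes into maximal runs of constant $q_k$-degree. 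Encoding the run-break pattern by a permutation $\sigma^{k-1} \in S_n$, while each run contributes a sub-chain in the remaining variables $q_1,\ldots,q_{k-1}$, reduces the left-hand side to a $q_k$-weighted sum over permutations times the corresponding identity in one fewer variable, which is the inductive hypothesis.

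The main obstacle is verifying that this iterated decomposition assembles precisely the generalized statistics $\comaj^i_T(\vec{\sigma})$ defined in the introduction. One must show that the tuple $(\sigma^1,\ldots,\sigma^{k-1})$ extracted from a chain, together with the descent data of $T$, reproduces the iterated labeling procedure, and that the total $q_i$-weight of a chain decomposes simultaneously across $i = 1,\ldots,k$ into the generalized comajor statistics. I expect this combinatorial bookkeeping, rather than the plethystic manipulations, to be the essential step of the proof.
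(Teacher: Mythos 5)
Your overall strategy coincides with the paper's: reduce to a generalized principal evaluation of the fundamental quasisymmetric functions (this is Theorem \ref{quasitheorem}; the paper gets there by standardizing semistandard tableaux with entries in $\mathbb{N}^k$, which is equivalent to your expansion $s_\lambda=\sum_{T}F_{n,\Des(T)}$), and then eliminate the variables one at a time, most significant first, extracting one permutation per eliminated variable. The reduction and the base case $k=1$ are fine.

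The gap is in the inductive step, and it is not mere bookkeeping. After you strip the $q_k$-degrees off a chain $s^1\leq\cdots\leq s^n$ (strict at positions in $D$), the residual $(k-1)$-variable data is \emph{not} another chain of the same form: it is an arbitrary list $(\bar s^1,\dots,\bar s^n)\in(\mathbb{N}^{k-1})^n$, and the $q_k$-degrees contribute $q_k^{c(D')}/(q_k;q_k)_n$ where $D'$ is the set of positions at which the residuals violate the original inequality requirements. So ``the corresponding identity in one fewer variable'' is not the statement you can invoke; you need the stronger claim that for every permutation $\sigma$ (the reading order of the residual list) and every prescribed descent set, the generating function of residual lists with that reading order and that descent set factors again. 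That is exactly Proposition \ref{generalproposition}, and it hinges on two things your sketch does not supply: (i) a precise tie-breaking rule for equal residuals --- the $R$-neighbor conditions in the definitions of $\sigma_R(S)$ and $\Des_{R,S}$ --- without which the extracted permutations $\sigma^i$ and hence the statistics $\comaj^i_T(\vec\sigma)$ are not even well defined (note that the ``run-break pattern'' is only a subset of $\{1,\dots,n-1\}$; the permutation comes from sorting the residuals, and the sort must be the right one); and (ii) the fact that the descent-set constraint created when you peel off $q_j$ is preserved by the injections used to peel off $q_{j'}$ for $j'<j$ (the Lemma following Proposition \ref{generalproposition}), without which the iteration does not close. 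You explicitly defer this verification as an expected ``obstacle,'' but it constitutes essentially the entire content of the proof.
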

 This plethystic evaluation is equivalent to evaluating $s_\lambda$ at all monomials in the variables $q_1,\dots,q_k$.
 This theorem gives a new way of writing the multigraded multiplicity of the irreducible representation corresponding to $\lambda$ in the polynomial ring with $k$ sets of variables.
 Orellana and Zabrocki \cite{OrellanaZabrocki} have a combinatorial interpretation for this expression in terms of monomial symmetric functions in the $q_i$. It can also be viewed as a character on $GL_k$ resulting from the duality.
 
 The proof of our theorem relies on Proposition \ref{generalproposition}, which interprets the product
 $$
(q_1;q_1)_n \cdots (q_k;q_k)_n s_{\lambda}\left[ \frac{1}{(1-q_1)\cdots(1-q_k)} \right] 
 $$
 as a sequence of injections on semistandard tableaux. Taking the compliments of the images, we arrive at a set of fixed points, given precisely by a list of $k-1$ permutations.
 The proposition can also be applied to infinite sequences, giving the infinite case of the principal evaluation as well.
  Let $Q = q_1+q_2+ \cdots$, and let 
 $$
 \Exp[Q]= \sum_{i \geq 0} h_i [Q] = \exp \sum_{i>0} p_k[X]/k
 $$
 be the generating series for the homogeneous symmetric functions. 
 \begin{theorem}\label{infiniteevaluation}
 For $\lambda \vdash n$ we have
\begin{align*}
s_{\lambda} \left[ \Exp[Q] \right] & =   s_\lambda\left[  \frac{1}{ (1-q_1)(1-q_2) \cdots}   \right] \\[.5em]  &= \frac{\sum_{T \in \SYT(\lambda)} \sum_{\vec{\sigma} \in_T S_{n}^{\infty} }Q^{\Comaj_T(\vec{\sigma})}}{  \prod_{i \geq 1}   (q_i;q_i)_n}. 
\end{align*} 
\end{theorem}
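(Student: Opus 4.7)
The first equality $s_\lambda[\Exp[Q]] = s_\lambda[1/\prod_{i\geq 1}(1-q_i)]$ is immediate from the plethystic identity $\Exp[Q] = \sum_{r\geq 0} h_r[Q] = \prod_{i\geq 1}(1-q_i)^{-1}$, viewed as a formal power series in $q_1, q_2, \ldots$, together with the fact that $s_\lambda$ is a polynomial in the power sum generators. This reduces the theorem to the rightmost equality.

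For that equality, the plan is to pass to the limit from Theorem \ref{finiteevaluation}, using the principle that a formal power series in infinitely many variables $q_1, q_2, \ldots$ is determined by its specializations $q_{k+1} = q_{k+2} = \cdots = 0$ taken over all $k \geq 1$. Under such a specialization, $s_\lambda[1/\prod_{i\geq 1}(1-q_i)]$ becomes $s_\lambda[1/\prod_{i=1}^k(1-q_i)]$, and the denominator $\prod_{i\geq 1}(q_i;q_i)_n$ collapses to $\prod_{i=1}^k(q_i;q_i)_n$, since $(0;0)_n = 1$. Theorem \ref{finiteevaluation} gives the correct value at each finite $k$, so the infinite case reduces to the combinatorial claim that setting $q_i = 0$ for $i > k$ in the numerator $\sum_{T,\vec\sigma}Q^{\Comaj_T(\vec\sigma)}$ recovers the numerator in Theorem \ref{finiteevaluation}.

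I read the notation $\vec\sigma \in_T S_n^\infty$ as the set of sequences $(\sigma^1, \sigma^2, \ldots) \in S_n^\infty$ that are eventually equal to $\epsilon$, with $\Comaj_T$ the natural extension of $(\comaj^1_T, \ldots, \comaj^k_T)$ to an eventually-zero sequence. The crucial combinatorial compatibility is then that trailing identity permutations contribute no comaj:
$$\comaj^i_T(\sigma^1,\ldots,\sigma^{k-1},\epsilon) = \comaj^i_T(\sigma^1,\ldots,\sigma^{k-1}) \text{ for } i \leq k, \qquad \comaj^{k+1}_T(\sigma^1,\ldots,\sigma^{k-1},\epsilon) = 0.$$
The main obstacle is establishing this stability, which requires a careful reading of the construction underlying Proposition \ref{generalproposition}, where $\comaj^i_T$ is defined via injections on semistandard tableaux whose fixed-point sets correspond to the $\sigma^i$. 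Once this stability is in hand, the two sides of the theorem agree on every finite specialization, hence as formal power series in $q_1, q_2, \ldots$.
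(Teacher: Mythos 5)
Your overall strategy --- dispose of the first equality via $\Exp[Q]=\prod_{i\ge 1}(1-q_i)^{-1}$ and then deduce the infinite identity from Theorem \ref{finiteevaluation} through the specializations $q_{k+1}=q_{k+2}=\cdots=0$ --- is legitimate, and it is genuinely different in mechanism from the paper, which instead reruns Proposition \ref{generalproposition} coordinate by coordinate on $(\mathbb{N}^{\infty})^n$ and observes that the process stabilizes because the sequences are eventually zero. The problem is that the combinatorial heart of your argument, the ``stability'' claim, is asserted rather than proved, and you yourself flag it as the main obstacle. It is true, but it rests on two facts that must be extracted from the constructions: (i) the prepending operation preserves the reading permutation, $\sigma_R(Z_{R,\sigma}(S))=\sigma$; and (ii) $\Des_{R,S}(\tau)=\emptyset$ if and only if $\tau=\sigma_R(S)$, so that $\Comaj_{R,S}(\tau)=0$ characterizes $\tau$ uniquely. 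Granting these, $\comaj^{k+1}_T(\sigma^1,\dots,\sigma^{k-1},\epsilon)=\Comaj_{R,\,Z_{R,\epsilon}(Z^{k-1})}(\epsilon)=0$ because $\sigma_R(Z_{R,\epsilon}(Z^{k-1}))=\epsilon$; without them the claim is unsupported.

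Even with stability in hand, your specialization step is only half an argument. Stability shows that each finite tuple $(\sigma^1,\dots,\sigma^{k-1})$ contributes the correct monomial to the specialized infinite numerator, but setting $q_i=0$ for $i>k$ retains \emph{every} term with $\comaj^i_T(\vec\sigma)=0$ for all $i>k$, which is a priori weaker than requiring the tail of $\vec\sigma$ to equal $\epsilon$ from position $k$ onward. You must rule out spurious survivors: by (ii), $\comaj^i_T(\vec\sigma)=0$ forces $\sigma^i=\sigma_R(Z^{i-1})$, which by (i) equals $\sigma^{i-1}$, so the sequence is constant from $\sigma^k$ on, and ``eventually $\epsilon$'' then forces $\sigma^i=\epsilon$ for all $i\ge k$. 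None of this appears in your write-up, and without it the two sides could differ after specialization. Finally, your reading of $\vec\sigma\in_T S_n^{\infty}$ does not match the paper's definition: the paper's set consists of sequences with $\sigma^1=\epsilon$ that eventually stabilize at $\tau_R$, the unique permutation with $\Comaj_{R,\emptyset}(\tau_R)=0$ (which is not the identity unless $R=\emptyset$), with the $\comaj$ exponents recorded in reversed order. The two index sets correspond under reversal of the active part of the sequence, so your version is equivalent, but as written you are proving a reindexed form of the statement rather than the one in the text.
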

We should note that Loehr and Warrington \cite{LoehrWarrington} have given an expression for $s_\lambda[s_\mu]$ in terms of quasisymmetric functions. A useful reference for us is Loehr and Remmel's overview of plethystic calculus \cite{LoehrRemmel}, where they formally describe how this expression can be expanded in terms of semistandard tableaux or quasisymmetric functions.

We will see that our result actually stems from a statement regarding a generalized principal evaluation on Gessel's fundamental basis for quasisymmetric functions \cite{Gessel}. This is given in Theorem \ref{quasitheorem}. Related work and expansions can be found in Gessel's survey of $P$-partitions \cite{GesselSurvey}. 
Recent work of Nadeau and Tewari \cite{NadeauTewari} give a principal specialization with one variable, motivated by the study of Schubert calculus. There is no obvious relation between their results and the present paper, but it would be interesting to see if or how they relate.

In the following section, we will show why Theorem \ref{finiteevaluation} implies Theorem \ref{KroneckerHarmonics}. The subsequent section will define the statistics involved in the main theorems. In particular, this section alone will be sufficient for understanding the statement in Theorem \ref{finiteevaluation}. 
We will then prove the finite and infinite generalized principal evaluations, and then finish by looking at some specific cases.\\

Loehr and Remmel's work \cite{LoehrRemmel} and Macdonald's book \cite{Macdonald} are good references for the symmetric and quasisymmetric function expressions we use here. Another useful reference with symmetric functions and permutation statistics is Mendes and Remmel's book \cite{MendesRemmel}.
For some representation theory background, one may find Goodman and Wallach's book \cite{GoodmanWallach} and Sagan's book \cite{Sagan} useful. For a nice treatment on the representation theory of the symmetric group, we refer to Garsia and E\u{g}ecio\u{g}lu's book \cite{GarsiaEgecioglu}.

  \section{Polynomial rings with $k$ sets of $n$ variables}
  
In this section we will look at the action of $S_n$ on $\mathbb{C}[X^k_n].$  We recall that when we have a single set of variables, the power sum polynomials $p_1,\dots,p_n$ are algebraically independent and generate $I_n = \mathbb{C}[x_1,\dots,x_n]^{S_n}$. We have 
 $$
 \mathcal{F} \mathbb{C}[x_1,\dots, x_n]  =  \frac{\mathcal{F} \bH_n }{(1-q)\cdots (1-q^n)}.
 $$
Using only a $q$ to record total degree, we can then write
 $$
 \frob \bH_n^{\otimes k} = (q;q)_n^k \frob \mathbb{C}[x_1,\dots, x_n]^{\otimes k} .
 $$

We use the variables $X^k_n=\{ {}_1x_1,\dots,  {}_1x_n, {}_2x_1,\dots, {}_2x_n, \dots, {}_kx_1,\dots, {}_kx_n\}$ to write the module on the right-hand side as 
 $$
 \mathbb{C}[x_1,\dots, x_n]^{\otimes k} \simeq \mathbb{C}[X^k_n].
 $$
The monomial basis is given by elements of the form
$$
x^{\alpha} = \prod_{i=1}^k\prod_{j=1}^n ( {}_ix_j)^{\alpha_j^i},
$$
 where $\alpha = (\alpha^1_1,\dots, \alpha^1_n,\dots, \alpha^k_1,\dots,\alpha^k_n)$ is a sequence of natural numbers $\alpha^i_j \in \mathbb{N} = \{0,1\dots\}$.
 Equivalently, we can set $\beta^r= (\alpha^1_r,\dots, \alpha^k_r)$ and denote a basis element by the list of sequences 
$$\beta=( \beta^1,\dots, \beta^n) ~~~ {\text{  with  } }~~~ \beta^i = (\alpha^1_i,\dots, \alpha^k_i). $$ We then also set $x^\beta =x^\alpha$. The orbit of this monomial, under the action of $S_n$, is given by permuting the elements of $\beta$. That is,
$$
\sigma x^\beta = x^{\sigma \beta} ~~~ \text{  with  } ~~~ \sigma \beta = (\beta^{\sigma_1}, \dots, \beta^{\sigma_n}).
$$ 

Suppose we have the permutation $\sigma = (1, 2, 3, 4) (5,6,7)(8)$ of cycle type $\lambda$. Then the sum of monomials $x^{\beta}$ which are fixed by $\sigma$ can be generated by the power series
$$
\prod_{i=1}^k  \frac{1}{1-{}_ix_1 ~{}_ix_2~ {}_ix_3 ~{}_ix_4} \cdot \frac{1}{1- {}_ix_5~ {}_ix_6 ~{}_ix_7} \cdot \frac{1}{1- {}_ix_8}
$$

We will now count the degree of ${}_ix_j$ using the variable $q_i$. Then the graded character of the action of $\sigma$ on $\mathbb{C}[X^k_n]$ is given by the series 
$$
\prod_{i=1}^k  \frac{1}{1-q_i^4} \frac{1}{1-q_i^3} \frac{1}{1-q_i} = \prod_{i=1}^k p_{4,3,1}\left[\frac{1}{1-q_i } \right] = p_\lambda\left[ \frac{1}{(1-q_1) \cdots (1-q_k)} \right]. 
$$
This holds for any permutation, meaning that the Frobenius image of the multi-graded character of the action of $S_n$ on $\mathbb{C}[X^k_n]$ is given by
\begin{align*}
R_{n,k}  = &~~~ \sum_{\lambda \vdash n} p_\lambda\left[ \frac{1}{(1-q_1)\cdots (1-q_k)} \right] p_\lambda[X]/z_\lambda&& \\
  =& ~~~h_n\left[ \frac{X}{(1-q_1)\cdots (1-q_k) } \right]  &&\\
  =& ~~~\sum_{\lambda \vdash n} s_{\lambda}\left[ \frac{1}{(1-q_1) \cdots (1-q_k)} \right] s_\lambda[X].&  &
\end{align*}
The above equalities are instances of Cauchy's identity, where for any two homogeneous dual bases $\{u_\lambda\}_\lambda, \{v_\lambda\}_\lambda$ and any two expressions $X$ and $Y$, we have
$$
h_n[XY] = \sum_{\lambda \vdash n} u_\lambda[X] v_\lambda[Y].
$$
We have then shown, by setting all $q_i = q$, that
$$
\langle \mathcal{F} \bH_n^{\otimes k} , s_\lambda \rangle = \left( \prod_{i=1}^{k} (q_i;q_i)^k_n \right) \langle R_{n,k} , s_\lambda \rangle \Big|_{q_i = q} = (q;q)^k_ns_\lambda\left[ \frac{1}{(1-q)^k} \right].
$$
Therefore, Theorem \ref{KroneckerHarmonics} is a consequence of Theorem \ref{finiteevaluation}. The following section will give a description of the combinatorial formula in Theorem \ref{finiteevaluation}.

\section{The $\Comaj$ statistic}
We are going to introduce a general $\comaj$ statistic on $S_n$ that depends on two parameters. One is any subset $R \subseteq \{1,\dots, n-1\} $ and the other is any list of sequences $S = (s^1,\dots, s^n) \in (\mathbb{N}^r)^n$. In this notation, $s^i= (s^i_1,\dots,s^i_r) \in \mathbb{N}^r.$  To help present our examples below, we will sometimes omit the commas in $s^i$ and write $s^i=s^i_1\cdots s^i_r$.

An effort will be made to denote sequences with a superscript and numbers with subscripts.
We will write $(z_1,\dots, z_r) < (s_1,\dots, s_r)$ for two sequences if at the first index of disagreement we have $z_i < s_i$; moreover, $z -1 < s $ will mean  $z \leq s$. This is the lexicographic ordering on sequences.

We are going to call two indices, $i<j$, $R$-neighbors if $i,i+1,\dots, j-1 \in R$. We will indicate that two indices $i$ and $j$ are $R$-neighbors by writing $i \sim_R j$.
Define
\begin{align*}
\Des_{R,S}(\sigma) = \{i :  s^{\sigma_{i}} >  s^{\sigma_{i+1}} 
&- \chi(   \text{ $\sigma_{i+1} < \sigma_i $ and $\sigma_{i+1} \not\sim_R\sigma_{i}    $} )  \\ 
&- \chi( \text{ $\sigma_{i+1} > \sigma_i $ and $\sigma_i \sim_R\sigma_{i+1}  $})               \}.
\end{align*}
We are using the indicator function $\chi(A)$, which gives $1$ if $A$ is true, and $0$ otherwise. If $r = 0$ so that $S$ is empty, we use $s^i = 0$ for all $i$ to compare inequalities.
\\

The idea of a descent is easier to explain by example. Before that, we note that if $s^{\sigma_{i } } > s^{\sigma_{i+1}}$, then we always have a descent at $i$. If $s^{\sigma_{i}} = s^{\sigma_{i+1}}$, then we have a descent only when one of the following holds:
\begin{itemize}
\item   $\sigma_{i+1} < \sigma_i $ and $\sigma_{i+1} \not \sim_R\sigma_{i}    $, or
\item   $\sigma_{i+1} > \sigma_i $ and $\sigma_i \sim_R\sigma_{i+1}  $.
\end{itemize}

Now for our example, we consider $R= \{ 3,4,6  \}$,  $\sigma = 1456723$, and 
$$S = ( 020  , 312  ,  312 , 011   ,   011, 100  ,010   ).$$
The $R$-neighbors are given by $3 \sim_R 4\sim_R 5$ and $6 \sim_R 7$.
We first see that the inequality conditions on the $s^i$ (ordered by $\sigma$) give 
$$
\overset{\bullet}{s}^1 > \overset{\bullet}{s}^4 = s^5 < \overset{\bullet}{s}^6>s^7 <s^2= s^3.
$$
We have labeled with a bullet mark the location of all the descents, given by $\{1,2,4\}$. Note that there is a descent at $2$ because $s^{\sigma_2} = s^4 = s^{\sigma_{3}}= s^5$, and $4 \in R$ (or $3 \sim_R 4$). However, even though $s^2 = s^3$, we do not have a descent at $6$ because $2 \notin R$ (or $2 \not \sim_R 3$).
\\
Define 
$$\Comaj_{R,S}(\sigma) = \sum_{i \in \Des_{R,S}(\sigma)} (n-i).
$$
Furthermore, define for $R$, $S \in (\mathbb{N}^r)^n$ and $\sigma$,
$$
Z_{R,\sigma}(S) = (z^1,\dots, z^n) \in (\mathbb{N}^{r+1})^n
$$
with
\begin{itemize}
\item $(z^i_2,\dots, z^i_n) = s^i$, and
\item $z^{\sigma_{i}}_1 = \sum_{j < i} \chi(j \in \Des_{R,S}(\sigma))$.
\end{itemize}
In other words, start with $z^{\sigma_1}_1 = 0$. If we have set $z^{\sigma_i}_1 = c$, then we set $z^{\sigma_{i+1}}_1 = c+1$ if $i \in \Des_{R,S}(\sigma)$, and set $z^{\sigma_{i+1}}_1 = c$ otherwise.  To see this in the previous example, which has $\sigma = 1456723$ and descent set $\{1,2,4\}$, we have
$$Z_{\{3,4,6\},\sigma}(S) = ( 0020  , 3312  ,  3312 , 1011   ,   2011, 2100  , 3010   ).$$
We see that that if $Z = Z_{R,\sigma}(S) $, then
$$
\Comaj_{R,S}(\sigma) = z^1_1+ \cdots + z^n_1
$$
is the sum of the first coordinates in the resulting list of sequences.
Define for any set $R$ and any sequence of permutations $\vec{\sigma}= (\sigma^1,\sigma^2,\dots)$,
$$
Z^r_{R, \vec{\sigma}}= Z_{R,\sigma^r} \cdots Z_{R,\sigma^1} ( \emptyset).
$$
For $\vec{\sigma} \in S_n^{\times k-1}$ and $T \in \SYT(\lambda)$, let
$
Z^r = Z^r_{\des(T), \vec{\sigma}}
$ and define for $1 \leq i \leq k$,
$$
\comaj^i_T(\vec{\sigma}) = \Comaj_{\des(T),Z^{i-1}}(\sigma^i)
$$
with $\sigma^k = \epsilon.$ Then Theorem \ref{finiteevaluation}
states that 
$$
\left(\prod_{i=1}^k (q_i;q_i)_n \right) s_{\lambda}\left[ \frac{1}{(1-q_1)\cdots (1-q_k)} \right] = \sum_{T \in \SYT(\lambda)} \sum_{\vec{\sigma} \in S_n^{\times k-1}}  q_1^{\comaj^1_T(\vec{\sigma}) } \cdots q_k^{\comaj^k_T(\vec{\sigma})}
$$

To fully illustrate the statistic, we will work out an example. Suppose we are given
$$
(3651274, 6523417, 1423567) \in S_7^{ \times 3}
$$ 
and 
\begin{align*} 
\begin{tikzpicture}[scale= .5]
\partitionfr{4,2,1}\draw (-2,1) node {$T~=$};
\draw (.5,.5) node {$1$};
\draw (1.5,.5) node {$ 2 $};
\draw ( .5,1.5) node {$3 $};
\draw (2.5,.5) node {$4 $};
\draw (3.5,.5) node {$5$};
\draw (1.5,1.5) node {$6$};
\draw (.5,2.5) node {$7$};
\end{tikzpicture}
\end{align*}
Then 
$$
\des(T) = \{2,5,6\}.
$$
Let $R = \{2,5,6\}$, meaning that the $R$-neighbors are given by $2 \sim_R 3$ and $5 \sim_R 6 \sim_R 7$.
We start with the permutation $\sigma^1= 365 127 4$. 
We mark the descents of $\sigma$ depending on $R,\emptyset$ with a bullet:
$$
\sigma^1 =  36 \overset{\bullet}{5}   12 \overset{\bullet}{7}  4
$$
Note that $2$ is not a descent because $\sigma_2 = \sigma_3 +1$ and $\sigma_3= 5 \in \des(T)$ (so that $\sigma_2 \sim_R \sigma_3$). 
Replacing position $i$ by the number of descents which preceded $\sigma_i$,
we get
$$
Z^1 = (1,1,0,2,0,0,1).
$$
We now look at $\sigma^2 = 6523417$.
From here we can compute that 
$$
\Des_{R,((1),(1),(0),(2),(0),(0),(1))} \left(6523417\right)   = \{ 3, 5 \}.
$$
The descents indicate that $z^6,z^5,z^2$ will have a $0$ appended, $z^3,z^4$ will have a $1$, and $z^1,z^7$ will have a $2$ appended:
$$
Z^2 = (21,01,10,12,00,00,21).
$$
For the last permutation, we have $\sigma^3=1423567,$ and
$$
\Des_{R,((21) , (01),(10),(12),(00),(00),(21))}   \left( 1423567  \right) = \{ 1, 2, 4,5  \}.
$$
The resulting sequences would be given by
$$
Z^3 = (021,201,210,112,300,400,421).
$$
The last step is to add the last coordinate to get $S= (s^1,\dots,s^n)$ with $s^i + \chi(i \in R) \leq s^{i+1}.$ 
Since  $\Des_{R,Z^3}( \epsilon)  = \{3\},$
we get
\begin{align*}
\begin{tikzpicture}[scale= .7]
\draw (-2,0) node {$Z^{4}=$};
\draw (0,0) node {$0021$};
\draw (1,0) node {$\leq$};
\draw (2,0) node {$0 201  $};
\draw (3,0) node {$<$};
\draw (4,0) node {$0 210$};
\draw (5,0) node {$\leq$ };
\draw (6,0) node {$1112$};
\draw (7,0) node {$\leq$};
\draw (8,0) node {$1300$};
\draw (9,0) node {$ <$};
\draw (10,0) node {$1400 $};
\draw (11,0) node {$< $};
\draw (12,0) node {$1421  $};
\end{tikzpicture}
\end{align*}

The point of this construction is that 
$Z^4$ will correspond to the following semistandard tableau:
\begin{align*} 
\begin{tikzpicture}[scale= .85]
\partitionfr{4,2,1}\draw (-2,1.5) node {$P~=$};
\draw (.5,.5) node {$0021$};
\draw (1.5,.5) node {$ 0201 $};
\draw ( .5,1.5) node {$0210$};
\draw (2.5,.5) node {$1112 $};
\draw (3.5,.5) node {$1300$};
\draw (1.5,1.5) node {$1400$};
\draw (.5,2.5) node {$1421$};
\end{tikzpicture}
\end{align*}
For $S \in (\mathbb{N}^r)^n$ let 
$$
q^S = \prod_{i=1}^r \prod_{j=1}^n q_i^{s^j_{r-i+1}}
$$
Then we have 
$$
\prod_{i=1}^k q_i^{\comaj^i_T (\vec{\sigma})} = q^{Z^k_R(\vec{\sigma})} = q_1^5 q_2^6  q_3^{16} q_4^4
$$ 
To get from Theorem \ref{finiteevaluation} to Theorem \ref{KroneckerHarmonics}, we set 
$$
\comaj_T(\vec{\sigma},\epsilon) = \comaj^1_T(\vec{\sigma}) + \cdots + \comaj_T^k(\vec{\sigma}) 
$$
In this example, we would have
$$
\comaj_T(3651274, 6523417, 1423567,\epsilon) = |P| = 5+6+16+4 = 31,
$$
where $|P|$ is the sum of all the entries in the tableau $P$.
This connection will be made clearer in the following sections.

\section{Proving the principal evalutations}
\subsection{A general proposition}
For any $R \subseteq \{1,\dots, n-1\}$ and list $S = (s^1,\dots, s^n) \in (\mathbb{N}^r)^n$, we will need to define a permutation $\sigma_R(S) \in S_n$ which reads the sequences in $S$ in a particular nondecreasing order.  To get this permutation, we read $s^1,\dots, s^n$ in increasing order, where if $s^i = s^j$, we read $s^i$ before $s^j$  whenever
\begin{itemize}
\item $i<j$ and $i \not \sim_R j$, or
\item $i> j$ and $i \sim_R j$.
\end{itemize}
We call this the reading order of $S$. \\

For example, for $R = \{2,3,4,7\}$ and 
$$
S = (110,010,010,110,010,210,110,210,010),
$$
we have
$$\sigma_R(S) = 532914768.
$$

Recall that for any list of sequences $Z$, we set
$$
q^Z = \prod_{i=1}^r q_{r-i+1}^{z^1_i+\cdots + z^n_i}  = q_1^{z^1_r+\cdots +z^n_r} \cdots q_r^{z^1_1+ \cdots +z^n_1}.
$$ 
Also, let $Z^{(i)} = (s^1,\dots, s^n)$ with $s^j=(z^j_i,\dots, z^j_n)$.
Then

\begin{proposition} \label{generalproposition} For any two subsets $R,D \subseteq \{1,\dots, n-1\}$, and any permutation $\sigma \in S_n$,
\begin{align*}
(q_r;q_r)_n \sum_{ \substack { Z \in (\mathbb{N}^r)^n \\ \sigma_R(Z) = \sigma   }} q^Z \chi \left(\Des_{R,Z^{(2)}}(\sigma) =D \right)   & =   q_r^{c(D)} \sum_{ \substack{ S \in (\mathbb{N}^{r-1})^n \\ 
    \Des_{R,S}(\sigma)  = D    }}  q^S,
\end{align*}
where $c(D) = \sum_{i \in D} (n-i) = \Comaj_{R,S}(\sigma).$
\end{proposition}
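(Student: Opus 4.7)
The plan is to fix $\sigma$ and $D$ and partition the left-hand sum according to the tail $S=Z^{(2)} \in (\mathbb{N}^{r-1})^n$. Since $q^Z = q_r^{z_1^1+\cdots+z_1^n}\, q^S$ and the indicator $\chi(\Des_{R,Z^{(2)}}(\sigma)=D)$ depends only on $S$, it suffices to enumerate, for each fixed $S$ with $\Des_{R,S}(\sigma)=D$, the admissible first-coordinate vectors $(a_1,\dots,a_n) := (z^{\sigma_1}_1,\dots,z^{\sigma_n}_1)$ that make $\sigma_R(Z)=\sigma$.

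The heart of the argument is a local case analysis at each position $i$, comparing $z^{\sigma_i}$ and $z^{\sigma_{i+1}}$ in reading order. Since the reading order uses lex comparison with an $\sim_R$-tiebreak on equal sequences: if $a_i<a_{i+1}$ the correct reading order at position $i$ is forced by the first coordinate alone; if $a_i>a_{i+1}$ the wrong order is forced; and if $a_i=a_{i+1}$ the comparison reduces to the tails $s^{\sigma_i}$ and $s^{\sigma_{i+1}}$ with the same $\sim_R$-tiebreak, which matches the reading order if and only if $i\notin \Des_{R,S}(\sigma)$. Thus, given $Z^{(2)}=S$ with $\Des_{R,S}(\sigma)=D$, the condition $\sigma_R(Z)=\sigma$ is equivalent to $(a_1,\dots,a_n)$ being a weakly increasing sequence of nonnegative integers with strict increase at each position $i\in D$.

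The remaining computation is standard. Decomposing $a_i=m_i+b_i$ where $m_i = |\{j<i : j\in D\}|$ is the componentwise minimum admissible sequence and $(b_1,\dots,b_n)$ ranges over arbitrary weakly increasing sequences of nonnegative integers, the first-coordinate weight becomes $q_r^{c(D)+b_1+\cdots+b_n}$ since $\sum_i m_i = \sum_{d\in D}(n-d) = c(D)$. Summing over weakly increasing $(b_i)$ yields $1/(q_r;q_r)_n$, so that summing over all admissible $S$ gives
\[
\sum_{\substack{Z\in (\mathbb{N}^{r})^n \\ \sigma_R(Z)=\sigma}} q^Z \chi(\Des_{R,Z^{(2)}}(\sigma)=D) \;=\; \frac{q_r^{c(D)}}{(q_r;q_r)_n} \sum_{\substack{S\in (\mathbb{N}^{r-1})^n \\ \Des_{R,S}(\sigma)=D}} q^S,
\]
which rearranges into the stated identity after multiplying by $(q_r;q_r)_n$.

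I expect the main technical obstacle to be the equality case in the local comparison: verifying that when first coordinates are tied, a violation of the reading order on $S$ coincides exactly with a descent of $\sigma$ with respect to $S$. This requires carefully unwinding both the definition of $\sigma_R$ and the two indicator conditions involving $\sim_R$ in the definition of $\Des_{R,S}$, and checking that the tiebreaks align in every subcase (according to the relative order of $\sigma_i,\sigma_{i+1}$ and whether they are $R$-neighbors). Once that equivalence is established, the rest of the proof is a direct generating-function manipulation.
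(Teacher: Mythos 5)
Your proof is correct, and it reaches the identity by a genuinely more direct route than the paper. The paper proves the proposition by constructing a family of injections $\phi_{n-i}$ (each adding $1$ to the first coordinates of $z^{\sigma_j}$ for $j>i$) and running an iterated sieve: multiplying by $(1-q_r^{n})$, then $(1-q_r^{n-1})$, and so on, peels off the images of these maps one at a time until only the set of $Z$ satisfying $z^{\sigma_{i+1}}_1 = z^{\sigma_i}_1 + \chi(i\in D)$ survives, from which $q_r^{c(D)}$ is factored. You instead fix the tail $S=Z^{(2)}$, characterize \emph{all} admissible first-coordinate vectors at once as the weakly increasing sequences with strict increases exactly forced at positions of $D$, and then evaluate the sum by the standard staircase substitution $a_i=m_i+b_i$ together with $\sum_{b_1\le\cdots\le b_n} q^{\sum b_i}=1/(q;q)_n$. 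The essential combinatorial content is the same in both arguments: your four-subcase verification that, when first coordinates tie, the $\sim_R$-tiebreak in the reading order aligns with the two indicator terms in $\Des_{R,S}$ is precisely the paper's case analysis of when a $1$ ``cannot be removed'' from $s^{\sigma_{i+1}}_1$ (note the paper's convention that $z-1<s$ means $z\le s$, which is what makes the weak versus strict inequalities match up in each subcase). What your packaging buys is a cleaner global statement --- the fiber over each fixed $S$ is an explicit set with an immediately summable generating function --- at the cost of losing the paper's explicit injections $\phi_{n-i}$, which the paper reuses later (in the lemma following the proposition) to show that the auxiliary indicator $\chi(\Des_{R,S^{k-1}}(\epsilon)=D^k)$ can be carried through repeated applications of the proposition; if you wanted to complete the full argument of Theorem \ref{finiteevaluation} along your lines, you would need an analogous statement that your fiber decomposition respects the higher-level descent conditions.
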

\begin{proof}
Define a family of injections $\{\phi_{n-i}\}$ on the set of $Z \in( \mathbb{N}^r)^n$ with $\sigma_R(Z) = \sigma$ by setting $\phi_{n-i}(Z) = S=(s^1,\dots, s^n) \in (\mathbb{N}^r)^n$ with $Z^{(2)} = S^{(2)}$ and
\begin{enumerate}
\item $s^{\sigma_j}_1 = z^{\sigma_j}$ for $j \leq i$, and 
\item $s^{\sigma_j}_1 = z^{\sigma_j} +1$ for $j > i$.
\end{enumerate}
In other words, we get $S$ from $Z$ by adding $1$ to all the first coordinates in $z^{\sigma_j}$ for $j>i$. 
This means that $q_r^{n-i} q^Z = q^{\phi_{n-i}(Z)}$.
By the definitions of $\phi_{n-i}$ and $\sigma_R(Z)$, we have 
$$
\sigma_R(Z) = \sigma_R( \phi_{n-i} Z).
 $$
and $(\phi_{n-i} Z )^{(2)} = Z^{(2)},
$
meaning
$$
\Des_{R,Z^{(2)}}(\sigma) = \Des_{R, (\phi_{n-i} Z)^{(2)}}(\sigma).
$$
For example, if we apply $\phi_{7-3}$ to $(01,20,11,20,11,20,01)$ when $R = \{3,4,5\}$, we would get
$$
(01,30,21,30,11,30,01).
$$
Note that the $11$ in position $5$ is preserved by $\phi_{7-3}$ since it is read before the $11$ in position $3$. This is because $3 \sim_R 5$.
\\

We now begin by looking at
$$
(1-q_r^n) \sum_{ \substack { Z \in (\mathbb{N}^r)^n \\ \sigma_R(Z) = \sigma   }} q^Z \chi(\Des_{R,Z^{(2)}}(\sigma) =D) .
$$
Let 
$$C = \{ Z \in (\mathbb{N}^r)^n: \text{ $\sigma_R(Z) = \sigma$ and $\Des_{R,Z^{(2)}}(\sigma) =D$}\}
$$
Define $C^n = C - \phi_nC$ to be the set $C$ with the image of $\phi_n$ removed. Note that $\phi_n$ is reversible if all the first entries are nonzero. Therefore, we have
$$
C^n = \{Z \in C: z^{\sigma_1}_1 = 0\}, \text{ and}
$$
$$
(1-q_r^n) \sum_{ Z \in C   } q^Z
= \sum_{ { Z \in C^n }} q^Z.
$$
We now recursively define
$$
C^{n-i} = C^{n-i+1} - \phi_{n-i} C^{n-i+1}.
$$
Suppose we have $S \in C^{n-i+1}$ and suppose, by induction, that we have for $j<i$
$$
s^{\sigma_{j+1}}_1 =s^{\sigma_{j}}_1+ \chi(j \in D).
$$
Then $\phi_{n-i} C^{n-i+1} \subseteq C^{n-i+1}$, since $\phi_{n-i}$ acts only on the last $n-i$ coordinates, while $C^{n-i+1}$ is defined by the relations in the first $i$ coordinates. Now observe that we have
$$
S \in C^{n-i} =  C^{n-i+1}-\phi_{n-i} C^{n-i+1}
$$
if we cannot remove $1$ from all the first entries in $s^{\sigma_j}$ for $j > i$. We need only check that we cannot remove $1$ from $s^{\sigma_{i+1}}_1$ in replacing $s^{\sigma_{i+1}}$ by $z = (s^{\sigma_{i+1}}_1-1,s^{\sigma_{i+1}}_2,\dots, s^{\sigma_{i+1}}_n)$. By this, we mean that $z$ is either lexicographically smaller than $s^{\sigma_i} $ or $z = s^{\sigma_i}$ and $z$ occurs before $s^{\sigma_i}$ in the reading order. The main observation is that we cannot remove a $1$ from $s^{\sigma_{i+1}}_1$ precisely when 
$$
 s^{\sigma_{i+1}}_1 =  s^{\sigma_{i }}_1 + \chi(i \in D).
$$
If $s^{\sigma_{i+1}}_1 >  s^{\sigma_{i }}_1+1$, then $z$ is lexicographically larger than $s^{\sigma_i}$. If $ s^{\sigma_{i+1}}_1 =  s^{\sigma_{i }}_1 $, then $z < s^{\sigma_i}$.
We are left to consider the case when  $s^{\sigma_{i+1}}_1 =  s^{\sigma_{i }}_1 + 1$; we must show that we cannot remove $1$ from $s^{\sigma_{i+1}}_1$ precisely when $i \in D$. To simplify notation, let $\overline{z} = (s^{\sigma_{i+1}}_2,\dots, s^{\sigma_{i+1}}_n)$ and $\overline{s} = (s^{\sigma_{i }}_2,\dots, s^{\sigma_{i }}_n)$. We need to study four cases:
\begin{enumerate}
\item If $\sigma_{i+1} < \sigma_i$, and $\sigma_{i+1} \sim_R \sigma_i$
then we would have $z$ before $s^{\sigma_i}$ in the reading order whenever 
$$
 \overline{s} > \overline{z}  .
$$

\item If $\sigma_{i+1} < \sigma_i $, and $\sigma_{i+1} \not\sim_R \sigma_i$,
then $z$ will be read before $s^{\sigma_i}$ if
$$
\overline{s} \geq \overline{z} . 
$$
\item If $\sigma_{i+1} > \sigma_i$, and $\sigma_{i+1} \sim_R \sigma_i$
then we would have $z$ before $s^{\sigma_i}$ in the reading order whenever 
$$
 \overline{s} \geq \overline{z}  .
$$

\item If $\sigma_{i+1} > \sigma_i $, and $\sigma_{i+1} \not\sim_R \sigma_i$,
then $z$ will be read before $s^{\sigma_i}$ if
$$
\overline{s} > \overline{z} . 
$$
\end{enumerate}
This means that $z$ would be read before $s$ if 
\begin{align*}
\overline{s} > \overline{z} &-\chi( \text{$\sigma_{i+1} < \sigma_i$ and $\sigma_{i+1} \not \sim_R \sigma_i$})\\
&-\chi( \text{$\sigma_{i+1} > \sigma_i$ and $\sigma_{i+1} \sim_R \sigma_i$}).
\end{align*}
This is precisely when we cannot remove a $1$ from the first coordinate of $s^{\sigma_{i+1}}$, meaning, we cannot remove a $1$ when 
 $$
 i \in \Des_{R,S^{(2)}}(\sigma) = D.
 $$

By induction, we have shown that
$$
(q_r;q_r)_n \sum_{ \substack { Z \in (\mathbb{N}^r)^n \\ \sigma_R(Z) = \sigma   }} q^Z \chi(\Des_{R,Z^{(2)}}(\sigma) =D) 
= \sum_{  { Z \in C^1    }} q^Z,
$$
where $C^1$ consists of all lists of sequences $(s^1,\dots, s^n) \in ( \mathbb{N}^r)^n$ satisfying, for all $i<n$,
$$
s^{\sigma_{i+1}}_1 =s^{\sigma_{i}}_1+ \chi(i \in D),
$$
and $\Des_{R, S^{(2)} }(\sigma) = D$. The sum of the first coordinates in $s^1,\dots, s^n$ is equal to $c(D)$. Factoring $q_r^{c(D)}$ gives
$$
\sum_{ \substack{ Z \in C^1  \\ \sigma_R(Z) = \sigma  }} q^Z = q_r^{c(D)} \sum_{ \substack{ S \in (\mathbb{N}^{r-1})^n \\ 
    \Des_{R,S}(\sigma)  = D    }}  q^S.
$$
This completes the proof.
\end{proof}

This proposition gives us the following application:
\begin{corollary}  For any $R \subseteq \{1,\dots,n-1\},$
\begin{align*}
(q_r;q_r)_n \sum_{ \substack { Z \in (\mathbb{N}^r)^n  \\\sigma_R(Z) = \sigma   }} q^Z   & =   \sum_{ { S \in (\mathbb{N}^{r-1})^n   }} q_r^{\Comaj_{R,S}(\sigma)} q^S \\
& = \sum_{ { S \in (\mathbb{N}^{r-1})^n   }}  q^{Z_{R,\sigma}(S)}.
\end{align*}
 
\end{corollary}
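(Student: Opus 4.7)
The plan is to obtain this corollary essentially for free from Proposition \ref{generalproposition} by summing the proposition's identity over all possible descent sets $D \subseteq \{1,\dots,n-1\}$. The two equalities in the corollary will come from two separate bookkeeping steps, and the work is routine once the proposition is in hand.

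First, I would sum both sides of the proposition over all $D \subseteq \{1,\ldots,n-1\}$. On the left, for each fixed $Z$ with $\sigma_R(Z) = \sigma$, the indicator $\chi(\Des_{R,Z^{(2)}}(\sigma) = D)$ is nonzero for exactly one $D$, so summing removes the indicator and produces $(q_r;q_r)_n \sum_Z q^Z$, which matches the left-hand side of the corollary. On the right, I would regroup: each $S \in (\mathbb{N}^{r-1})^n$ has a unique descent set $D = \Des_{R,S}(\sigma)$, and by definition of $\Comaj$ the constant $c(D) = \sum_{i \in D}(n-i)$ equals $\Comaj_{R,S}(\sigma)$ on this subset. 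Thus the sum over $D$ reassembles into $\sum_{S \in (\mathbb{N}^{r-1})^n} q_r^{\Comaj_{R,S}(\sigma)} q^S$, giving the first equality.

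For the second equality I would unpack the definition of $Z_{R,\sigma}(S)$. Given $S \in (\mathbb{N}^{r-1})^n$, this map prepends a new first coordinate to each $s^i$ via $z^{\sigma_{1}}_1 = 0$ and $z^{\sigma_{i+1}}_1 = z^{\sigma_i}_1 + \chi(i \in \Des_{R,S}(\sigma))$. Summing over $i$ gives the telescoping identity
\begin{align*}
\sum_{i=1}^n z^{\sigma_i}_1 \;=\; \sum_{j=1}^{n-1} (n-j)\,\chi(j \in \Des_{R,S}(\sigma)) \;=\; \Comaj_{R,S}(\sigma).
\end{align*}
Since the weight $q^Z$ pairs $q_r$ with the first coordinates of $Z$ and otherwise agrees with $q^S$ on the remaining coordinates, this yields $q^{Z_{R,\sigma}(S)} = q_r^{\Comaj_{R,S}(\sigma)} q^S$ termwise, which summed over $S$ gives the second equality.

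There is no real obstacle here; the entire content of the corollary is a repackaging of the proposition. The only point that requires care is the indexing convention, namely that $q^Z$ for $Z \in (\mathbb{N}^r)^n$ assigns $q_r$ to the first coordinate slot (so removing/prepending a slot shifts only the $q_r$ factor), and that the map $Z_{R,\sigma}$ is consistent with the grouping by descent set used in the first step.
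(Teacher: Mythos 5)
Your proposal is correct and is exactly the argument the paper intends: the paper states the corollary as an immediate "application" of Proposition \ref{generalproposition} with no written proof, and the intended derivation is precisely your summation over all descent sets $D$ (using that each $Z$, resp.\ each $S$, has a unique descent set) together with the observation that the first coordinates of $Z_{R,\sigma}(S)$ sum to $\Comaj_{R,S}(\sigma)$. Your bookkeeping of the $q_r$ indexing is also consistent with the paper's convention for $q^Z$.
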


\subsection{Applying the proposition}

We know that 
$$
s_{\lambda} \left[ \frac{1}{(1-q_1)\cdots (1-q_k) }\right] = \sum_{P \in \SSYT(\lambda, \mathbb{N}^k)} q^P,
$$
where if $P$ is a semistandard tableau with entries $(s^1,\dots, s^n) \in (\mathbb{N}^k)^n$ ordered lexicographically, we set
$$
q^P = q^S = q_1^{s^1_k+\cdots +s^n_k} \cdots q_k^{s^1_1+ \cdots +s^n_1}.
$$
A semistandard Young tableau of shape $\lambda$ with entries given by $S$ standardizes to a standard tableau $T$ if 
$$
s^i \leq s^{i+1} - \chi(i \in \des(T)).
$$
On the other hand, every semistandard tableau that standardizes to $T$ is given by sequences of this form. Therefore,
$$
\sum_{P \in \SSYT(\lambda, \mathbb{N}^k)} q^P = \sum_{T\in \SYT(\lambda)} \sum_{ \substack{  S \in (\mathbb{N}^k)^n      \\     \sigma_{\des(T)}(S) = \epsilon    }   } q^S.
$$
To simplify our expressions, let $R = \des(T)$. Then
applying the lemma to the right-most sum gives
\begin{align*} 
(q_k;q_k)_n  \sum_{ \substack{  S^k  \in (\mathbb{N}^k)^n      \\     \sigma_{R}(S^k) = \epsilon    }   } q^{S^k} 
& = \sum_{D^k \subseteq \{1,\dots, n-1\} } q_k^{c(D^k)} \sum_{\sigma \in S_n} \sum_{ \substack{  S^{k-1} \in  (\mathbb{N}^{k-1})^n      \\     \sigma_{R}(S^{k-1}) = \sigma    }   }  q^{S^{k-1}} \chi(\Des_{R,S^{k-1}}(\epsilon) = D^k).
\end{align*}
We now multiply the result by $(q_{k-1};q_{k-1})_n$ and relabel $\sigma$ to be $\sigma^{k-1}$.
\begin{align*} 
&(q_{k-1} ;q_{k-1})_n  (q_{k } ;q_{k })_n  
\sum_{ \substack{  S^k \in (\mathbb{N}^k)^n      \\     \sigma_{R}(S^k) = \epsilon    }   } q^{S^k} \\
 & = \sum_{D^k \subseteq \{1,\dots, n-1\} } q_k^{c(D^k)} \sum_{\sigma^{k-1} \in S_n} ~ (q_{k-1};q_{k-1})_n  \sum_{ \substack{  S^{k-1} \in  (\mathbb{N}^{k-1})^n      \\     \sigma_{R}(S^{k-1}) = \sigma^{k-1}    }   }  q^{S^{k-1}} \chi(\Des_{R,S^{k-1}}(\epsilon) = D^k)
\end{align*}
 Now apply Proposition \ref{generalproposition} on the last summation to get 
 \begin{align*}
  (&q_{k-1} ;q_{k-1})_n    \sum_{ \substack{  S^{k-1} \in  (\mathbb{N}^{k-1})^n      \\     \sigma_{R}(S^{k-1}) = \sigma^{k-1}    }   }  q^{S^{k-1}}    \chi(\Des_{R,S^{k-1}}(\epsilon) = D^k) \\
& =\sum_{D^{k-1} \subseteq \{1,\dots, n\} }   q_{k-1}^{c(D^{k-1})}   \sum_{\sigma^{k-2} \in S_n} \sum_{ \substack{  S^{k-2} \in  (\mathbb{N}^{k-2})^n      \\     \sigma_{R}(S^{k-2}) = \sigma^{k-2}    }   }  q^{S^{k-2}} \\  & ~~~~
\times \chi( \Des_{R,   Z_{R, \sigma^{k-1}}(S^{k-2}) }  (\epsilon) = D^k    )   \chi(\Des_{R, S^{k-2}}(\sigma^{k-1}) =D^{k-1} ).
\end{align*}

We must make an important comment here. Note that there is the extra factor of $ \chi(\Des_{R,S^{k-1}}(\epsilon) = D^k) $ which does not appear in the proposition. 
However, this factor can be ignored and carried over as we did in the above computation.
To see this, we must make sure that the maps $\{\phi_{n-i}\}$ defined in Proposition \ref{generalproposition} 
have the following property:

\begin{lemma}
If for $S \in (\mathbb{N}^m)^n$ we have $\Des_{R,Z_{R,(\sigma^1,\dots,\sigma^r)} S}(\tau) = D$, then for any injection $\phi_{n-j}$ defined in Proposition \ref{generalproposition}, we have
$$
\Des_{R,Z_{R,(\sigma^1,\dots,\sigma^r)} \phi_{n-j} S}(\tau) = D
$$
\end{lemma}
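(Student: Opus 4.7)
The plan is to induct on $r$, reducing the statement to a base-case assertion: the injection $\phi_{n-j}$ of Proposition~\ref{generalproposition}, viewed as a map on $(\mathbb{N}^m)^n$ via the reading order $\sigma_R(S)$, preserves $\Des_{R,S}(\tau)$ for every $\tau\in S_n$. Recall that $\phi_{n-j}$ adds $1$ to the first coordinate of precisely the entries occupying reading-order positions $j+1,\dots,n$, and is designed so that $\sigma_R(\phi_{n-j}S)=\sigma_R(S)$. This reading-order invariance is the crux of the argument.

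For the base case, fix $\tau$ and consecutive indices $i,i+1$, and let $a$ denote whichever of $\tau_i,\tau_{i+1}$ comes earlier in $\sigma_R(S)$. Because $\phi_{n-j}$ respects the reading order, the only situation that alters the lexicographic comparison of $s^{\tau_i}$ and $s^{\tau_{i+1}}$ is when $a$ sits in reading-order positions $1,\dots,j$ while the other sits in $j+1,\dots,n$; otherwise both entries shift equally and the comparison is unchanged. A short case analysis on which of $\tau_i,\tau_{i+1}$ plays the role of $a$ and on whether the entries are lexicographically equal then closes the argument, using the key observation that when $s^{\tau_i}=s^{\tau_{i+1}}$ the $R$-neighbor conditions defining a descent at $i$ are precisely the complements of the reading-order tiebreak rules which determined which entry was earlier. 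Hence if $a=\tau_i$, equality implies there is no descent and $\phi_{n-j}$ produces none, while if $a=\tau_{i+1}$, equality implies there was a descent, and $\phi_{n-j}$ strictifies the tie into $(\phi_{n-j}S)^{\tau_i}>(\phi_{n-j}S)^{\tau_{i+1}}$ in the first coordinate, preserving the descent.

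For the inductive step, the base case applied with $\tau=\sigma^1$ shows that the prepended coordinate of $Z_{R,\sigma^1}S$ is unchanged by $\phi_{n-j}$, so $Z_{R,\sigma^1}(\phi_{n-j}S)$ differs from $Z_{R,\sigma^1}S$ only in the second coordinate, in a pattern inherited from $\phi_{n-j}$ on $S$. To iterate I would strengthen the inductive hypothesis to cover \emph{any} modification of the form ``add $1$ to a single fixed coordinate of a prescribed subset of entries, provided the modification preserves the ambient reading order''; the base-case analysis then applies verbatim with ``first coordinate'' replaced by whichever coordinate is being shifted, since lex comparison is decided at the first differing coordinate. The main obstacle will be verifying that this admissibility property survives prepending: once the descent set relative to $\sigma^{r+1}$ at the previous level has been shown preserved, the newly prepended coordinate is identical on both sides and the modification remains confined to lower coordinates, so the induction can continue cleanly and deliver the claimed equality $\Des_{R,Z_{R,(\sigma^1,\dots,\sigma^r)}\phi_{n-j}S}(\tau)=D$ at the outermost level.
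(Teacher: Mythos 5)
Your argument is correct and is essentially the paper's: the same case analysis comparing the descent condition at a pair $(\tau_i,\tau_{i+1})$ against the reading-order tiebreak, using that $\phi_{n-j}$ increments a suffix of the reading order $\sigma_R(S)$ and hence can never increment only the earlier-read entry of a tied pair. The only organizational difference is that you isolate the $r=0$ case and induct on $r$, which has the merit of making explicit a fact the paper's proof uses silently, namely that the prepended coordinates of $Z_{R,(\sigma^1,\dots,\sigma^r)}S$ and $Z_{R,(\sigma^1,\dots,\sigma^r)}\phi_{n-j}S$ coincide.
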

Before beginning the proof, we look at a quick example. Suppose $R= \{3,4,5\}$ and we are given 
$$
Z = (2001, 1020,0111,1020,0111,1020,2001).
$$
Then $\Des_{R,Z}(5362471) = \{3,6\}.$ Now we are going to apply $\phi_{7-3}$ to 
$$Z^{(3)} = (01,20,11,20,11,20,01).$$
As we saw in the proof of Proposition \ref{generalproposition}, we get
$$
\phi_{4}Z^{(3)} = (01,30,21,30,11,30,01).
$$
Therefore, applying $\phi_4$ to the third coordinates in $Z$, we get
$$
Z' = (2001,1030,0121,1030,0111,1030,2001).
$$
Then the main observation is that we have $\Des_{R,Z'}(5362471) = \{3,6\} = \Des_{R,Z}(5362471).$

\begin{proof}
Let us denote $Z_{R,(\sigma^1,\dots,\sigma^r)} S$ by $Z= (z^1,\dots, z^n)$, and $Z_{R,(\sigma^1,\dots,\sigma^r)} \phi_{n-j} S$ by $W=(w^1,\dots, w^n)$. Note that this means that 
$$
z^i = (z^i_1,\dots, z^i_r,s^i_1,\dots,s^i_{m}).
$$

Suppose $i \in \Des_{R,Z}(\tau) = D$. We want to show that $i \in \Des_{R,W}(\tau) $. By definition, we know we must have
\begin{align*}
z^{\tau_i} > z^{\tau_{i+1}} 
&- \chi(   \text{ $\tau_{i+1} < \tau_i $ and $\tau_{i+1} \not\sim_R\tau_{i}    $} )  \\ 
&- \chi( \text{ $\tau_{i+1} > \tau_i $ and $\tau_i \sim_R\tau_{i+1}  $})               \}.
\end{align*}
If $z^{\tau_i} > z^{\tau_{i+1}}$, then we either have  
$$
(z^{\tau_i}_1,\dots, z^{\tau_i}_r) > (z^{\tau_{i+1}}_1,\dots, z^{\tau_{i+1}}_r) 
$$
or
$$
\text{$ (z^{\tau_i}_1,\dots, z^{\tau_i}_r) = (z^{\tau_{i+1}}_1,\dots, z^{\tau_{i+1}}_r) $ and $s^{\tau_i} > s^{\tau_{i+1}}$}.
$$
In the first case, no matter if $\phi_{n-j}(S)$ increases the $\tau_i$ or $\tau_{i+1}$ sequence in $S$, we will always have 
$$
w^{\tau_{i}}>w^{\tau_{i+1}}.
$$
In the second case, since $s^{\tau_i} > s^{\tau_{i+1}}$, $\phi_{n-j}$ will either increase the first coordinates of both, or only the first coordinate of $s^{\tau_i}$. In either case, we have 
$
w^{\tau_{i}}>w^{\tau{i+1}}.
$
\\

Now if $z^{\tau_i} = z^{\tau_{i+1}}$ then we must have that either $\tau_{i+1} < \tau_i $ and $\tau_{i+1} \not\sim_R\tau_{i}    $; or $\tau_{i+1} > \tau_i $ and $\tau_i \sim_R\tau_{i+1}  $. In both cases we have that $s^{\tau_i} = s^{\tau_{i+1}}.$ The first case implies that $\phi_{n-j}$ will either increase both sequences, or only $s^{\tau_{i}}$. This is because $s^{\tau_{i+1}}$ appears before $s^{\tau_i}$ and $\tau_{i+1} \not \sim_R \tau_i$. In the second case, we have that $\tau_i \sim_R \tau_{i+1}$, but $\tau_{i}$ appears first. This means that $\phi_{n-j}$ will either increase both or only $s^{\tau_i}$. In either case, we get that $w^{\tau_i} \geq w^{\tau_{i+1}}$. In all cases above, we get that $i \in \Des_{R,W}(\tau)$. 
\\

Now suppose instead that $i \notin D$. This means that 
\begin{align*}
z^{\tau_i} \leq z^{\tau_{i+1}} 
&- \chi(   \text{ $\tau_{i+1} < \tau_i $ and $\tau_{i+1} \not\sim_R\tau_{i}    $} )  \\ 
&- \chi( \text{ $\tau_{i+1} > \tau_i $ and $\tau_i \sim_R\tau_{i+1}  $})               \}.
\end{align*}
If $z^{\tau_i} < z^{\tau_{i+1}}$, then again we either have 
$$
(z^{\tau_i}_1,\dots, z^{\tau_i}_r) < (z^{\tau_{i+1}}_1,\dots, z^{\tau_{i+1}}_r) 
$$
or
$$
\text{$ (z^{\tau_i}_1,\dots, z^{\tau_i}_r) = (z^{\tau_{i+1}}_1,\dots, z^{\tau_{i+1}}_r) $ and $s^{\tau_i} < s^{\tau_{i+1}}$}.
$$
In the first case, $\phi_{n-j}$ does not affect the order of the first $r$ coordinates, meaning $w^{\tau_i} < w^{\tau_{i+1}}$. In the second case, since $s^{\tau_i} <s^{\tau_{i+1}}$, we have that $\phi_{n-j}$ will either increase both sequences or only $s^{\tau_{i+1}}$. In either case, we get $w^{\tau_i} < w^{\tau_{i+1}}$. 
\\

If instead we have that $z^{\tau_i} = z^{\tau_{i+1}}$ (and therefore $s^{\tau_i} = s^{\tau_{i+1}}$), then we either have $\tau_{i+1} < \tau_i $ and $\tau_{i+1} \sim_R\tau_{i}    $; or $\tau_{i+1} > \tau_i $ and $\tau_i \not \sim_R\tau_{i+1}  $. In the first case, since $\tau_{i} \sim_R \tau_{i+1}$, $\phi_{n-j}$ will either increase both $s^{\tau_i}$ and $s^{\tau_{i+1}}$ or only $s^{\tau_{i+1}}$. In the second case, since $\tau_{i} \not \sim_R \tau_{i+1}$ but this time $\tau_{i+1} > \tau_i$, we have that $\phi_{n-j}$ will either increase both $s^{\tau_i}$ and $s^{\tau_{i+1}}$ or only $s^{\tau_{i+1}}$. In either case, we end with $w^{\tau_i} \leq w^{\tau_{i+1}}$. From looking at all cases, we get that $i \notin \Des_{R,W}(\tau)$. This concludes the proof.

\end{proof}

This Lemma allows us to continue applying Proposition \ref{generalproposition} to all coordinates. Continuing the previous computation, we end with
\begin{align*}
\left( \prod_{i=1}^k (q_i;q_i)_n \right)  &\sum_{ \substack{  Z \in (\mathbb{N}^k)^n      \\     \sigma_{\des(T)}(Z) = \epsilon    }   } q^Z  \\
& = \sum_{D^k,\dots, D^1} \sum_{\vec{\sigma} \in S_n^{\times k-1}}
 q_k^{c(D^k)} \cdots q_1^{c(D^1)}\prod_{i=1}^k \chi(\Des_{R,Z^{i-1}}(\sigma^{i}) =D^{i} ).
\end{align*}
where $Z^i = Z_{R,\sigma^i} (Z^{i-1})$ with $Z^0 = \emptyset$, and $\sigma^k = \epsilon$.
Note that for each $\sigma^i$ there is precisely one $D^i$ for which $(\Des_{R,Z^{i-1}}(\sigma^{i}) =D^{i})$ holds. The surviving terms are then determined by the vector $\vec{\sigma}\in S_n^{\times k-1}$ and the lists 
$$
Z^i = Z_{R,\sigma^i} \cdots Z_{R,\sigma^1} (\emptyset) = Z^i_{R,\vec{\sigma}}.
$$
Furthermore, 
$$c(D^i) = \comaj^i_R(\vec{\sigma}) = \Comaj_{R,Z^{i-1}}(\sigma^i).$$
Putting everything together, we get
\begin{align*}
\left( \prod_{i=1}^k (q_i;q_i)_n \right)  \sum_{ \substack{  Z \in (\mathbb{N}^k)^n      \\     \sigma_{R}(Z) = \epsilon    }   } q^Z 
&= \sum_{ \vec{\sigma} \in S_n^{\times k-1} }    q^{Z^{k}_{R,\vec{\sigma}} }       \\
& = \sum_{ \vec{\sigma} \in S_n^{\times k-1} } q_1^{\comaj^1_R(\vec{\sigma})} \cdots q_k^{\comaj^k_R(\vec{\sigma})} 
\end{align*}
This completes the proof of Theorem \ref{finiteevaluation}, but it also says a little more. Recall that we started with a standard tableau $T$ whose descent set is $R$. 
Let $Z = (z^1,\dots, z^n) = Z^k_{R,\vec{\sigma}} $  be as defined above, and let $P_{T,\vec{\sigma}} $ be the semistandard tableau one gets by replacing $i$ by $z^i$ in $T$.
Define
$$
\mathcal{T}^k_{\lambda} = \{ P_{T,\vec{\sigma}}: \text{ $T\in \SYT(\lambda)$ and $\vec{\sigma } \in S_n^{\times k-1}$} \}.
$$
For instance, we saw in the previous section that for 
\begin{align*} 
\begin{tikzpicture}[scale= .5]
\partitionfr{4,2,1}\draw (-2,1) node {$T~=$};
\draw (.5,.5) node {$1$};
\draw (1.5,.5) node {$ 2 $};
\draw ( .5,1.5) node {$3 $};
\draw (2.5,.5) node {$4 $};
\draw (3.5,.5) node {$5$};
\draw (1.5,1.5) node {$6$};
\draw (.5,2.5) node {$7$};
\end{tikzpicture},
\end{align*}
\begin{align*} 
\begin{tikzpicture}[scale= .85]
\partitionfr{4,2,1}\draw (-3,1.5) node {$P_{T, (3651274, 6523417, 1423567)}~=$};
\draw (.5,.5) node {$0021$};
\draw (1.5,.5) node {$ 0201 $};
\draw ( .5,1.5) node {$0210$};
\draw (2.5,.5) node {$1112 $};
\draw (3.5,.5) node {$1300$};
\draw (1.5,1.5) node {$1400$};
\draw (.5,2.5) node {$1421$};
\end{tikzpicture}
\end{align*}
is an element of $\mathcal{T}^4_{(4,2,1)}$.

\begin{theorem}
For any $n$, $k$, and $\lambda \vdash n$, we have
$$
\left( \prod_{i=1}^k (q_i;q_i)_n \right) s_{\lambda} \left[ \frac{1}{(1-q_1)\cdots(1-q_k) } \right] = \sum_{P \in \mathcal{T}^k_\lambda} q^P.
$$
\end{theorem}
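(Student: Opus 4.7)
The plan is to deduce the theorem directly from Theorem \ref{finiteevaluation} by producing a weight-preserving bijection
$$\Phi: \SYT(\lambda) \times S_n^{\times k-1} \longrightarrow \mathcal{T}^k_\lambda, \qquad (T, \vec{\sigma}) \longmapsto P_{T, \vec{\sigma}}.$$
The weight identity $\prod_{i=1}^k q_i^{\comaj^i_T(\vec{\sigma})} = q^{P_{T,\vec{\sigma}}}$ is already recorded in the example of the previous section (it follows from $\prod_i q_i^{\comaj^i_T(\vec{\sigma})} = q^{Z^k_{R,\vec{\sigma}}}$ together with the definition $q^P = q^Z$). Once $\Phi$ is shown to be a bijection, the theorem is immediate: the right-hand side of Theorem \ref{finiteevaluation} becomes $\sum_{P \in \mathcal{T}^k_\lambda} q^P$.

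Surjectivity of $\Phi$ is built into the definition of $\mathcal{T}^k_\lambda$, so the real work is to construct a well-defined inverse. Given $P \in \mathcal{T}^k_\lambda$, I first recover $T$ as the standardization of $P$: the construction of $P_{T,\vec{\sigma}}$ uses the final step with $\sigma^k = \epsilon$, which enforces $z^i \leq z^{i+1} - \chi(i \in \des(T))$ along the labels of $T$, and this is exactly the condition that $P$ standardizes to $T$ in the sense used throughout the paper. With $R = \des(T)$ in hand, I peel off the leftmost coordinate of each cell to obtain a list $Z^{k-1}$ of length-$(k-1)$ sequences, set $\sigma^{k-1} := \sigma_R(Z^{k-1})$, and iterate: at stage $r$, the current list $Z^r$ determines $\sigma^r = \sigma_R(Z^r)$, after which the newly-exposed first coordinate is stripped to give $Z^{r-1}$. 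This procedure recovers $\vec{\sigma} = (\sigma^1,\dots,\sigma^{k-1})$ unambiguously, provided each stripping is consistent with the forward construction.

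The key technical input at every stage is the lemma $\sigma_R(Z_{R,\sigma}(S)) = \sigma$. By construction, the prepended first coordinates of $Z_{R,\sigma}(S)$, when read in the order $\sigma_1,\dots,\sigma_n$, form a weakly increasing sequence that jumps by $1$ exactly at $i \in \Des_{R,S}(\sigma)$. Strict increases force the reading order to agree with $\sigma$; the subtle point is that at equalities ($i \notin \Des_{R,S}(\sigma)$) the tie-breaking rule for $\sigma_R$ ($i$ before $j$ when either $i<j$ with $i \not\sim_R j$, or $i>j$ with $i \sim_R j$) must match exactly the complementary cases in the definition of $\Des_{R,S}(\sigma)$. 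Verifying this compatibility is the main obstacle, but the check decomposes into the same four cases ($\sigma_{i+1}\lessgtr\sigma_i$ combined with $\sim_R$ or not) already handled in the proof of Proposition \ref{generalproposition}, so it transports with only cosmetic changes. Once this lemma is in place, injectivity of $\Phi$ follows by induction on $k$, closing the argument.
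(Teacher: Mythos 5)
Your proposal is correct and follows essentially the same route as the paper: the theorem is read off from Theorem \ref{finiteevaluation} together with the weight identity $\prod_{i} q_i^{\comaj^i_T(\vec{\sigma})} = q^{Z^k_{R,\vec{\sigma}}} = q^{P_{T,\vec{\sigma}}}$, which is exactly how the paper extracts it from the fixed points of the sieve in Proposition \ref{generalproposition}. Your explicit inverse (standardization to recover $T$, then iterated stripping of first coordinates using $\sigma_R(Z_{R,\sigma}(S)) = \sigma$ to recover $\vec{\sigma}$) supplies the injectivity of $(T,\vec{\sigma}) \mapsto P_{T,\vec{\sigma}}$ that the paper leaves implicit, and the four-case compatibility check you defer does go through as claimed.
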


We will now work out a new example. Suppose 
\begin{align*} 
\begin{tikzpicture}[scale= .5]
\partitionfr{2,2,1,1}\draw (-2,1) node {$T~=$};
\draw (.5,.5) node {$1$};
\draw (1.5,.5) node {$ 3$};
\draw ( 1.5,1.5) node {$4 $};
\draw (.5,1.5) node {$2 $};
\draw (.5,2.5) node {$5$};
\draw (.5,3.5) node {$6$};
\end{tikzpicture},
\end{align*}
Then $R= \{1,3,4,5\}$. This says that $1 \sim_R 2$ and $3 \sim_R 4 \sim_R 5 \sim_R 6$. We will now construct
$Z_{R,(\vec{\sigma})}^2$ for $\vec{\sigma} = ( 631254, 365412)$.
First note that $\Des_{R,\emptyset}(631254) = \{ 2, 3\}$ giving
$$
Z^1 = (1 ,2 ,0 ,2 ,2 , 0).
$$
Next, $\Des_{R,Z^1}(365412) = \{  1, 4   \}$, giving
$$
Z^2 = (21 ,22 , 00 ,12 ,12 , 10).
$$
Lastly, we look at $\Des_{R,Z^2}(\epsilon) = \{  2,4,5   \}$ to get
$$
Z^3 =  (021 ,022 , 100 ,112 ,212 , 310).
$$
This gives

\begin{align*} 
\begin{tikzpicture}[scale= .85]
\partitionfr{2,2,1,1}\draw (-3,1.5) node {$P_{T, (  631254, 365412  )}~=$};
\draw (.5,.5) node {$021$};
\draw (.5,1.5) node {$ 022 $};
\draw ( 1.5,.5) node {$100$};
\draw (1.5,1.5) node {$112 $};
\draw (.5,2.5) node {$212$};
\draw (.5,3.5) node {$310$};
\end{tikzpicture}
\end{align*}

\section{The infinite evaluation}
To take $k \rightarrow \infty$ we first define $\mathbb{N}^{\infty}$ to be the set of sequences $(s_1,s_2,\dots)$ with entries in $\mathbb{N}$, with the extra condition that for some $m$ we have $s_i = 0$ for all $i>m$. Then we have
$$
s_{\lambda}\left[ \frac{1}{(1-q_1)(1-q_2) \cdots} \right] = \sum_{T \in \SYT(\lambda)} \sum_{ \substack{ Z \in (\mathbb{N}^{\infty})^n
\\ \sigma_{\des(T)} (Z) = \epsilon}} Q^Z,
$$
where we now set
$$
Q^Z = \prod_{i \geq 1} q_i^{z^1_i+ \cdots z^n_i}.
$$
If $k$ is the last index in $Z=(z^1,\dots, z^n)$ for which $z^i_k \neq 0$ for some $k$, we have 
$$
Q^Z = q^{Z'} \Big|_{q_i \rightarrow q_{k-i+1}},
$$
where $Z'$ is the list $Z$ with all sequences restricted to the first $k$ coordinates.
Let $Z^{(i)} = (s^1,\dots, s^n)$, with $s^j  = (z^j_i,z^j_{i+1} ,\dots) $,
be the list $Z$ with the first $i-1$ coordinates of each sequence $z^j$ removed. 
We can still apply Proposition \ref{generalproposition} since it depends only on the first coordinates of our list $Z$. 
Eventually, however, we must reach an index $i$ for which 
$$
Z^{(i)} = ((0,\dots),\dots, (0,\dots))
$$
consists purely of zero entries. Given any set $R \subseteq\{1,\dots,n-1\}$, let $\tau_R$ be the unique permutation in $S_n$ for which $\Comaj_{R,\emptyset}(\tau_R) =0.$
The sequence $(\sigma_R(Z^{(j)}))_{j\geq 1} = (\sigma^1,\sigma^2,\dots)$ must then satisfy the following conditions:
\begin{enumerate}
\item $\sigma^1 = \epsilon,$ and
\item there is an $m$ for which $\sigma^i= \tau_R$ for $ i > m$.
\end{enumerate}
Define $_RS_n^{\infty}$ be the set of all such infinite sequences of permutations. 
For such a sequence $\vec{\sigma}$, there is a smallest $m$ for which $\sigma^m \neq \tau_R$ but $\sigma^i = \tau_R$ for $i>m$. Let $\underline{\vec{\sigma}} = (\sigma^m,\dots, \sigma^2).$ We then set
$$
Q^{\Comaj_R(\vec{\sigma})} = q_1^{\comaj^m_R(\underline{\vec{\sigma}})} 
\cdots q_m^{\comaj^1_R(\underline{\vec{\sigma}})}.
$$
We have
$$
\left( \prod_{i \geq 1}  (q_i;q_i)_n \right)  \sum_{ \substack{ \vec{\sigma} \in (\mathbb{N}^{\infty})^n
\\ \sigma_{R} (S) = \epsilon}} Q^Z
=\sum_{ \vec{\sigma} \in _RS_n^{\infty}} Q^{\Comaj_R(\vec{\sigma})},
$$
proving Theorem \ref{infiniteevaluation}.

\subsection{The quasisymmetric case}

We have actually shown something stronger. Recall Gessel's Fundamental basis for quasisymmetric functions \cite{Gessel}: For any subset $R \subseteq \{1,\dots,n-1\}$, we have
\begin{align*}
F_{n,R}(x_1,x_2,\dots) & = \sum_{\substack{ a_1\leq \cdots \leq a_n  \\  a_i < a_{i+1}  \text{ for $ i \in R$}}} x_{a_1} \cdots x_{a_n} .
\end{align*}
Now, letting $1/(1-q_1)(1-q_2) \cdots$ denote all monomials $1,q_1, q_2\dots,q_1^2,q_1q_2,\dots$ in the variables $q_1,q_2,\dots$ we can write
$$
F_{n,R} \left[ \frac{1}{(1-q_1)\cdots(1-q_k) }\right] = \sum_{ \substack{  S \in (\mathbb{N}^k)^n  \\  \sigma_R(S) = \epsilon  }} q^S.
$$
The condition $\sigma_R(S) = \epsilon$ is another way of writing the inequality conditions in the definition of the fundamental basis, where now, instead of looking at integers, we are looking at inequalities between sequences.
This is precisely the sum in Proposition \ref{generalproposition}. The computation in the proof of Theorem \ref{finiteevaluation} starts by summing over standard tableaux. We then isolated a descent set $R$ and made the following computation:
\begin{theorem} \label{quasitheorem}
For any $R \subseteq \{1,\dots, n-1\} $ and any $k$, we have
$$
F_{n,R} \left[ \frac{1}{(1-q_1)\cdots(1-q_k) }\right] = \frac{  \sum_{\vec{\sigma} \in S_n^{\times k-1}} q_1^{\comaj^1_R (\vec{\sigma})} \cdots q_k^{\comaj^k_R(\vec{\sigma}) }   }{(q_1;q_1)_n \cdots (q_k;q_k)_n}.
$$
In particular, letting $k$ go to infinity, we have
$$
F_{n,R} \left[ \frac{1}{(1-q_1) (1-q_2) \cdots}\right] =\frac{  \sum_{ \vec{\sigma} \in _RS_n^{\infty}} Q^{\Comaj_R(\vec{\sigma})} }{ (q_1;q_1)_n(q_2;q_2)_n\cdots}.
$$
\end{theorem}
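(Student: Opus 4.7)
The plan is to identify the plethystic evaluation of $F_{n,R}$ with a sum of monomials over sequence-lists whose reading permutation is the identity, and then iterate Proposition \ref{generalproposition} to peel off one variable at a time. The first step is to unfold the definition of Gessel's fundamental quasisymmetric function: $F_{n,R}[1/((1-q_1)\cdots(1-q_k))]$ is the sum of $q^S$ over all $S=(s^1,\dots,s^n) \in (\mathbb{N}^k)^n$ satisfying $s^i \leq s^{i+1}$, with strict inequality at precisely the indices $i \in R$. Since the reading convention defining $\sigma_R(S)$ breaks ties using exactly the $R$-neighbor rule that governs strict versus weak inequalities, the condition $\sigma_R(S) = \epsilon$ is an exact translation of this inequality system. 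Hence
$$
F_{n,R}\!\left[\frac{1}{(1-q_1)\cdots(1-q_k)}\right] \;=\; \sum_{\substack{S \in (\mathbb{N}^k)^n \\ \sigma_R(S) = \epsilon}} q^S,
$$
which is precisely the sum on the left-hand side of Proposition \ref{generalproposition} (with $\sigma = \epsilon$ and a summation over all descent sets $D$ inserted).

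Having made this identification, I would multiply by the product $(q_1;q_1)_n \cdots (q_k;q_k)_n$ and apply Proposition \ref{generalproposition} iteratively, once per variable. Each application strips off a factor $(q_r;q_r)_n$, introduces a new permutation $\sigma^{r-1} \in S_n$ to sum over, and records a descent set $D^r \subseteq \{1,\dots,n-1\}$ together with the weight $q_r^{c(D^r)}$, where the indicator $\chi(\Des_{R,Z^{r-1}}(\sigma^r) = D^r)$ is carried forward. This is structurally the same computation already performed inside the proof of Theorem \ref{finiteevaluation}, but here there is no outer sum over $T \in \SYT(\lambda)$: the statement is for a single fixed $R$. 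Once all $k$ variables have been peeled off, the vector $\vec{\sigma} = (\sigma^1,\dots,\sigma^{k-1})$ determines each $D^i$ uniquely via $D^i = \Des_{R,Z^{i-1}_{R,\vec{\sigma}}}(\sigma^i)$, so $c(D^i) = \comaj^i_R(\vec{\sigma})$ and the surviving sum is exactly $\sum_{\vec{\sigma} \in S_n^{\times k-1}} q_1^{\comaj^1_R(\vec{\sigma})} \cdots q_k^{\comaj^k_R(\vec{\sigma})}$, as claimed.

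The step I would watch most carefully — and the main obstacle — is the bookkeeping when the indicator $\chi(\Des_{R,S^{r-1}}(\sigma^r) = D^r)$ must be carried through the next round of injections $\phi_{n-j}$. Proposition \ref{generalproposition} as stated is about a single application with no leftover descent constraints, so without the compatibility lemma about the maps $\phi_{n-j}$ (the Lemma appearing just after Proposition \ref{generalproposition} in the text), the indicator would in principle obstruct further peeling. That Lemma guarantees exactly what is needed: applying $\phi_{n-j}$ inside an iterated $Z_{R,(\sigma^1,\dots,\sigma^r)}$ preserves the descent set of any downstream permutation $\tau$, so each indicator factor remains valid after every subsequent injection. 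Citing this Lemma lets the iteration proceed mechanically, exactly as in the proof of Theorem \ref{finiteevaluation}.

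Finally, for the infinite case I would take $k \to \infty$ using the convention that $\mathbb{N}^\infty$ consists of eventually-zero sequences. For any fixed $Z \in (\mathbb{N}^\infty)^n$, only finitely many coordinates are nonzero, so the iterated application of Proposition \ref{generalproposition} terminates after finitely many steps; beyond this point, the list $Z^{(j)}$ is identically zero and its reading permutation is forced to be the unique $\tau_R$ with $\Comaj_{R,\emptyset}(\tau_R) = 0$. This matches the definition of ${}_RS_n^\infty$ as sequences with $\sigma^1 = \epsilon$ that are eventually equal to $\tau_R$, and the weight $Q^{\Comaj_R(\vec{\sigma})}$ records precisely the accumulated $c(D^i)$ contributions from the finitely many nontrivial peelings. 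The identity of Theorem \ref{quasitheorem} in the infinite case then follows directly, and Theorem \ref{infiniteevaluation} is recovered by summing over $T \in \SYT(\lambda)$ with $R = \des(T)$.
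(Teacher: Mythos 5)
Your proposal is correct and follows essentially the same route as the paper: identify $F_{n,R}\bigl[1/((1-q_1)\cdots(1-q_k))\bigr]$ with $\sum_{\sigma_R(S)=\epsilon} q^S$, iterate Proposition \ref{generalproposition} one variable at a time while invoking the compatibility lemma for the maps $\phi_{n-j}$ to carry the descent-set indicators forward, and handle the infinite case via eventually-zero sequences stabilizing at $\tau_R$. The only quibble is the word ``precisely'' in describing the inequality system (strict inequality is required at indices in $R$ but not forbidden elsewhere), which does not affect the argument.
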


\section{A special case}
We are going to look at the special case when $\lambda = (n)$ is a single row. There is only one semistandard tableau $T$ of shape $(n)$ and it has descent set $\emptyset$. To compute $\comaj_{T}(\vec{\sigma},\epsilon)$, we recall that
$
\comaj_{T}^r(\vec{\sigma})= \Comaj_{\emptyset, Z^{r-1}}(\sigma^r),
$
where 
$$
Z^{r-1} = Z_{\emptyset,\sigma^{r-1}}\cdots Z_{\emptyset,\sigma^1}(\emptyset).
$$
The simplest case to start with is when $k = 2$. Then our permutation vector consists of a single element $\sigma$. Constructing $Z_{\emptyset,\sigma} (\emptyset)$ is simply the labelling for $\comaj (\sigma)$ described in the introduction. 
\\

For instance, if $\sigma= 63482715,$ then we start with setting $s^{\sigma_1} = s^6 = (0)$. Since $\sigma_2 = 3 < 6 = \sigma_1$, we have a descent and we set $s^{\sigma_2}= s^3 = (1)$. Since $4 = \sigma_3 < \sigma_4 = 8$, we set $s^{\sigma_3} = s^{\sigma_4} = (1)$. We have a descent at $4$, so we set $s^{\sigma_5} = (2)$. In the end, we get
$$
( s^1,s^2,s^3,s^4,s^5,s^6,s^7,s^8) = ( (3) , (2)  ,(1), (1),   (3)   ,   (0)    , (2)  , (1)) .
$$
The sum of the entries is $\comaj(\sigma)$. Next, we add a new coordinate in front of each $s^{i}$. We start with  $z^1 = (0,s^1_1)$.  If we labeled $s^i$ by $c$, and $s^i_1 > s^{i+1}_1$, then we have a descent at $i$ and we label $s^{i+1}$ with $c+1$. Otherwise we label $s^{i+1}$ by $c$. Using the above example, we would have 
$$
( z^1,z^2,z^3,z^4,z^5,z^6,z^7,z^8) = ( (0,3) , (1,2)  ,(2,1), (2,1),   (2,3)   ,   (3,0)    , (3,2)  , (4,1)) .
$$
Note that $s^i_1 > s^{i+1}_1$ means that $\sigma^{-1}_i > \sigma^{-1}_{i+1},$ or rather $i \in \des(\sigma^{-1})$. Therefore, the sum of the first entries is $\comaj(\sigma^{-1})$. We get

\begin{corollary}
For any $n$, we have 
$$
(q_1;q_1)_n (q_2,q_2)_n h_n \left[ \frac{1}{(1-q_1)(1-q_2)} \right] = \sum_{\sigma \in S_n} q_1^{\comaj(\sigma^{-1})} q_2^{\comaj(\sigma)}. 
$$
As a consequence, we have
$$
\langle \mathcal{F} \bH_n^{\otimes 2}, s_n \rangle = \sum_{\sigma \in S_n} q^{\comaj(\sigma)+ \comaj(\sigma^{-1})}.
$$
\end{corollary}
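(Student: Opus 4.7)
The plan is to obtain this corollary as a direct specialization of Theorem \ref{finiteevaluation} to $\lambda = (n)$ and $k=2$, and then to derive the second identity by the formula $\langle \mathcal{F} \bH_n^{\otimes k}, s_\lambda \rangle = (q;q)_n^k \, s_\lambda[1/(1-q)^k]$ established in Section~2. The only real work lies in recognizing what the new statistics $\comaj^1_T$ and $\comaj^2_T$ degenerate to in this extreme setting.

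First I would note that there is a unique standard tableau $T$ of shape $(n)$, namely the single row $1\,2\,\cdots n$, and its descent set is $R = \des(T) = \emptyset$. With $R = \emptyset$, no two indices are $R$-neighbors, so the extra indicator corrections in the definition of $\Des_{R,S}$ vanish. The sum over $\vec{\sigma} \in S_n^{\times k-1}$ becomes a sum over a single permutation $\sigma \in S_n$, and Theorem \ref{finiteevaluation} reduces to
\[
(q_1;q_1)_n(q_2;q_2)_n \, h_n\!\left[\tfrac{1}{(1-q_1)(1-q_2)}\right] = \sum_{\sigma \in S_n} q_1^{\comaj^1_T(\sigma)} q_2^{\comaj^2_T(\sigma)},
\]
using $s_{(n)} = h_n$.

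Next I would identify the two statistics. For $\comaj^1_T(\sigma) = \Comaj_{\emptyset,\emptyset}(\sigma)$, both $R$ and the preceding sequence data are empty, so the descent condition collapses to $\sigma_i > \sigma_{i+1}$, i.e.\ the ordinary $\des(\sigma)$, and the resulting weighted sum is the classical $\comaj(\sigma)$ recalled in the introduction. For $\comaj^2_T(\sigma) = \Comaj_{\emptyset, Z^1}(\epsilon)$, I would trace the construction of $Z^1 = Z_{\emptyset,\sigma}(\emptyset)$: it assigns to position $\sigma_j$ the count of descents of $\sigma$ occurring strictly before index $j$, producing a sequence of one-coordinate entries $(z^1_1, \ldots, z^n_1)$. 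Applying $\Des_{\emptyset, Z^1}(\epsilon)$, an index $i$ is a descent precisely when $z^i_1 > z^{i+1}_1$; but by construction this holds exactly when $i+1$ appears earlier than $i$ in the one-line word of $\sigma$, that is, when $\sigma^{-1}_i > \sigma^{-1}_{i+1}$. Hence the descent set computed is $\des(\sigma^{-1})$ and the corresponding weighted sum is $\comaj(\sigma^{-1})$.

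Substituting these identifications yields the first equation of the corollary. The second then follows at once by setting $q_1 = q_2 = q$ and invoking the Section~2 identity $\langle \mathcal{F} \bH_n^{\otimes 2}, s_n \rangle = (q;q)_n^2 \, h_n[1/(1-q)^2]$. The only mildly subtle step is the inverse-comaj identification: one has to verify that the labels $z^i_1$ produced by the descent-labelling encode the relative positions of consecutive values $i$ and $i+1$ in $\sigma$. This reduces to the elementary observation that $z^i_1 \leq z^{i+1}_1$ iff $i$ appears to the left of $i+1$ in the one-line word of $\sigma$, which is the condition $\sigma^{-1}_i < \sigma^{-1}_{i+1}$. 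Once this viewpoint is chosen the argument is immediate, so the proof is essentially a careful unpacking of the construction illustrated in the author's worked example.
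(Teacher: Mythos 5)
Your proposal is correct and follows essentially the same route as the paper: specialize Theorem \ref{finiteevaluation} to the single-row tableau with empty descent set and $k=2$, identify $\comaj^1_T(\sigma)=\Comaj_{\emptyset,\emptyset}(\sigma)$ with the classical $\comaj(\sigma)$ and $\comaj^2_T(\sigma)=\Comaj_{\emptyset,Z^1}(\epsilon)$ with $\comaj(\sigma^{-1})$ via the observation that $z^i_1>z^{i+1}_1$ exactly when $i\in\des(\sigma^{-1})$, and then set $q_1=q_2=q$ to get the Hilbert-series statement. The paper presents the same identification by working through an example rather than a formal argument, so your explicit justification of the key equivalence is, if anything, slightly more complete.
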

The following Lemma will give us the general case.
\begin{lemma}
For any $\sigma \in S_n$ and $S \in (\mathbb{N}^r)^n$ with $\sigma_{\emptyset}(S) = \tau$, we have
$$
\Des_{\emptyset,S}(\sigma) = \des(\tau^{-1} \sigma).
$$
\end{lemma}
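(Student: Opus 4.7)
The plan is to directly compare the two descent sets after unpacking the definitions in the case $R = \emptyset$. With no $R$-neighbors present, both of the correction indicators appearing in the definition of $\Des_{R,S}$ simplify drastically: the condition $\sigma_{i+1} \not\sim_R \sigma_i$ always holds, while $\sigma_i \sim_R \sigma_{i+1}$ never does. Hence
\[
\Des_{\emptyset,S}(\sigma) = \{ i : s^{\sigma_i} > s^{\sigma_{i+1}} - \chi(\sigma_{i+1} < \sigma_i ) \},
\]
which, using the convention $z-1 < s \iff z \leq s$, splits into two cleaner sub-criteria: $i$ is a descent exactly when either (a) $\sigma_{i+1} < \sigma_i$ and $s^{\sigma_i} \geq s^{\sigma_{i+1}}$, or (b) $\sigma_{i+1} > \sigma_i$ and $s^{\sigma_i} > s^{\sigma_{i+1}}$.

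Next I would unpack the right-hand side. Since $\tau = \sigma_\emptyset(S)$ is defined by the reading order of $S$, $\tau^{-1}(j)$ is precisely the position of $s^j$ in the reading order. Therefore $i \in \des(\tau^{-1}\sigma)$ iff $\tau^{-1}(\sigma_i) > \tau^{-1}(\sigma_{i+1})$, which means $s^{\sigma_i}$ is read after $s^{\sigma_{i+1}}$. Since $R = \emptyset$, the reading order is just lexicographic on pairs $(s^i, i)$: $s^i$ precedes $s^j$ iff $s^i < s^j$, or $s^i = s^j$ and $i < j$. Consequently $i \in \des(\tau^{-1}\sigma)$ iff $s^{\sigma_i} > s^{\sigma_{i+1}}$, or $s^{\sigma_i} = s^{\sigma_{i+1}}$ together with $\sigma_i > \sigma_{i+1}$.

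I would then finish by a short three-case comparison on the lex relation between $s^{\sigma_i}$ and $s^{\sigma_{i+1}}$: if $s^{\sigma_i} > s^{\sigma_{i+1}}$, both criteria place $i$ in the descent set regardless of whether $\sigma_i$ beats $\sigma_{i+1}$; if $s^{\sigma_i} < s^{\sigma_{i+1}}$, neither does; and if $s^{\sigma_i} = s^{\sigma_{i+1}}$, both criteria reduce to the condition $\sigma_i > \sigma_{i+1}$. Since the two descent conditions agree in each case, the lemma follows. The main obstacle is purely bookkeeping — making sure the strict-versus-weak inequalities from the $\chi$-corrections in $\Des_{\emptyset,S}$ line up exactly with the tie-breaking rule in the reading order — so the proof is essentially a careful side-by-side reading of the two definitions rather than a computation.
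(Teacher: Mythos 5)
Your proof is correct and is essentially the same definition-unpacking argument as the paper's: both reduce $\Des_{\emptyset,S}(\sigma)$ to the condition ``$s^{\sigma_i} \geq s^{\sigma_{i+1}}$ if $\sigma_{i+1}<\sigma_i$, strict otherwise'' and identify $\tau^{-1}$ with position in the reading order. The only difference is organizational — you split on the trichotomy $s^{\sigma_i} \gtrless s^{\sigma_{i+1}}$ in a single equivalence, while the paper proves the two inclusions separately with cases on $\sigma_i$ versus $\sigma_{i+1}$ — and your bookkeeping of the strict/weak inequalities against the tie-breaking rule is accurate.
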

\begin{proof}
 We know that since $R$ is empty in the definition of descents, we have
\begin{align*}
D=\Des_{\emptyset,S}(\sigma) = \{i :  s^{\sigma_{i}} >  s^{\sigma_{i+1}}- \chi(\sigma_{i+1} < \sigma_i)                        \}
\end{align*}
Let $i \in D$. If $\sigma_{i+1} > \sigma_i$, then we are guaranteed that $s^{\sigma_{i}} > s^{\sigma_{i+1}}. $ By the definition of $\sigma_R(S) = \tau$, we have that $\tau^{-1}(\sigma_i) >\tau^{-1}(\sigma_{i+1}).$
In other words, 
$$
i \in \des(\tau^{-1}\sigma).
$$
If $\sigma_{i+1} < \sigma_i$, then we have a descent at $i$ provided  $s^{\sigma_i} \geq s^{\sigma_{i+1}}$. If the inequality is strict, we see as before that $i \in \des(\tau^{-1} \sigma)$. Suppose instead that $s^{\sigma_i} = s^{\sigma_{i+1}}$. Then since $\tau$ must read $ s^{\sigma_{i+1}}$ before $s^{\sigma_i}$ (since $s^{\sigma_{i+1}}$ is on the left of $s^{\sigma_i}$), we have that 
$$
\tau^{-1}(\sigma_{i}) > \tau^{-1}(\sigma_{i+1}).
$$
Again, this means $i \in \des(\tau^{-1}\sigma).$  

Now suppose that $i \in \des(\tau^{-1} \sigma).$
Suppose first that $\sigma_{i+1} > \sigma_i$. Then since we know
$$
\tau^{-1}(\sigma_{i}) > \tau^{-1}(\sigma_{i+1}),
$$
we have that $s^{\sigma_i} > s^{\sigma_{i+1}}$. This is because $s^{\sigma_i}$ lies on the left of $s^{\sigma_{i+1}}$, but $\tau$, which reads the $s^i$ in increasing order, reads  $s^{\sigma_{i+1}}$ before $s^{\sigma_i}$.
Now if $\sigma_{i+1} < \sigma_i$, then 
$\tau$ travels left to right from $s^{\sigma_{i+1}} $ to $s^{\sigma_i}$. This implies the inequality $s^{\sigma_{i+1}} \leq s^{\sigma_i}$. Combining both cases, we get
$$
s^{\sigma_{i}} >  s^{\sigma_{i+1}} - \chi(\sigma_{i+1} < \sigma_i),   
$$
or rather $i \in \Des_{\emptyset,S}(\sigma)$.
\end{proof}

The following theorem seems to be a variation of Garsia and Gessel's identities  \cite{GarsiaGessel}.
\begin{theorem}
\begin{align*}
h_n\left[ \frac{1}{(1-q_1)\cdots (1-q^k)} \right] = \frac{\sum\limits_{  \sigma^1\cdots \sigma^k = \epsilon  } q_1^{\comaj(\sigma^1 ) }   \cdots q_{k }^{\comaj(\sigma^{k }) }} { (q_1;q_1)_n\cdots (q_k;q_k)_n}
\end{align*}
\end{theorem}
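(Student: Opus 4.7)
The plan is to specialize Theorem \ref{finiteevaluation} to $\lambda = (n)$. Since $s_{(n)} = h_n$ and the unique $T \in \SYT((n))$ has $\des(T) = \emptyset$, the parameter $R = \des(T)$ is empty throughout. The Lemma just established, $\Des_{\emptyset, S}(\sigma) = \des(\tau^{-1}\sigma)$ with $\tau = \sigma_{\emptyset}(S)$, rewrites each $\Comaj_{\emptyset, Z^{i-1}}(\sigma^i)$ as an ordinary $\comaj$ of the permutation $(\tau^{i-1})^{-1}\sigma^i$, where $\tau^{i-1} := \sigma_{\emptyset}(Z^{i-1})$. The crux is then to identify $\tau^{i-1}$ with $\sigma^{i-1}$ itself and to perform a bijective change of variables.

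The main technical step, and the principal obstacle, is to establish $\sigma_{\emptyset}(Z^i) = \sigma^i$ by induction on $i$. The construction $Z^i = Z_{\emptyset, \sigma^i}(Z^{i-1})$ prepends to each entry $z^{\sigma^i_j}$ a first coordinate equal to the running count of $Z^{i-1}$-descents of $\sigma^i$ through position $j-1$. Positions sharing a common first coordinate $c$ form a block $\sigma^i_{d_c+1}, \ldots, \sigma^i_{d_{c+1}}$ lying between consecutive $Z^{i-1}$-descents of $\sigma^i$. Inside each block, the reading order of $Z^i$ is determined by the remaining coordinates (which are the $Z^{i-1}$-entries), with ties broken by smaller position index first (since $R = \emptyset$). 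A case analysis on whether $\sigma^i_{j+1}$ is greater or smaller than $\sigma^i_j$, applied to the non-descent condition $z^{\sigma^i_j} \le z^{\sigma^i_{j+1}} - \chi(\sigma^i_{j+1} < \sigma^i_j)$, shows in every case that $\sigma^i_j$ is read before $\sigma^i_{j+1}$ inside the block. Concatenating blocks in increasing $c$ order produces the reading order $\sigma^i_1, \sigma^i_2, \ldots, \sigma^i_n$, which is precisely $\sigma^i$.

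Combining these facts with $\sigma^k = \epsilon$ and $\tau^0 = \epsilon$ (since $Z^0 = \emptyset$), I obtain $\comaj^i_T(\vec{\sigma}) = \comaj\bigl((\sigma^{i-1})^{-1}\sigma^i\bigr)$ for each $i = 1, \ldots, k$, where $\sigma^0 = \sigma^k = \epsilon$. The change of variables $\pi^i := (\sigma^{i-1})^{-1}\sigma^i$ then yields the telescoping product $\pi^1 \pi^2 \cdots \pi^k = (\sigma^0)^{-1}\sigma^k = \epsilon$, and the inverse $\sigma^i = \pi^1 \pi^2 \cdots \pi^i$ identifies $S_n^{\times k-1}$ bijectively with the set of $k$-tuples $(\pi^1, \ldots, \pi^k) \in S_n^{\times k}$ whose product is $\epsilon$. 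Substituting into the specialization of Theorem \ref{finiteevaluation} gives the desired formula. As a sanity check, the $k = 2$ instance collapses to the Corollary above, since in that case $\pi^2 = (\pi^1)^{-1}$.
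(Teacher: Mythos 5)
Your proof is correct and follows essentially the same route as the paper: specialize Theorem \ref{finiteevaluation} to $\lambda=(n)$, use the Lemma $\Des_{\emptyset,S}(\sigma)=\des(\tau^{-1}\sigma)$ to turn each $\comaj^i_T$ into an ordinary $\comaj$ of a quotient of consecutive permutations, and then telescope via the change of variables $\pi^i=(\sigma^{i-1})^{-1}\sigma^i$. The one point where you go beyond the paper is in explicitly establishing $\sigma_{\emptyset}(Z^{i-1})=\sigma^{i-1}$ (needed to apply the Lemma with $\tau=\sigma^{i-1}$); the paper uses this identification silently, and your block-by-block verification of the reading order is a correct and worthwhile addition.
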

\begin{proof}
Using Theorem \ref{finiteevaluation}, we need only check that when $T$ is a row of size $n$, we have
$$
\sum_{\vec{\pi} \in S_{n}^{\times k-1 } } q_1^{\comaj^1_T (\vec{\pi})} \cdots q_k^{\comaj^k_T(\vec{\pi}) } =
 \sum\limits_{  \sigma^1\cdots \sigma^k = \epsilon  } q_1^{\comaj(\sigma^1 ) }   \cdots q_{k }^{\comaj(\sigma^{k }) }
$$
By the previous Lemma, we have that for $1 \leq i \leq k$,
$$
\comaj^i_T(\pi^i ) = \comaj ( (\pi^{i-1})^{-1} \pi^i ),
$$
with $\pi^k = \pi^0 = \epsilon$.
Now let $\sigma^1 = \pi^1$, and set $\sigma^i = (\pi^{i-1})^{-1} \pi^i.$ With this reindexing, we have that 
$$
\comaj^i_T(\pi^i ) = \comaj (\sigma^i).
$$
The last part to note is that since $\pi^k = \epsilon,$ $\sigma^k$ is fully determined by the previous $\sigma^i$. Note that
$$
\sigma^1 \cdots \sigma^{k-1} \sigma^k = \epsilon \pi^1 (\pi^1)^{-1} \pi^2 (\pi^2)^{-1}\cdots \pi^{k-1} (\pi^{k-1})^{-1} \epsilon = \epsilon.
$$
This means $\sigma^k = (\sigma^1 \cdots \sigma^{k-1})^{-1}$. In turn, we have shown that 
$$
 q_1^{\comaj^1_T (\vec{\pi})} \cdots q_k^{\comaj^k_T(\vec{\pi}) } = q_1^{\comaj(\sigma^1 ) }   \cdots  q_{k }^{\comaj(  \sigma^{k})  }
$$
with $\sigma^1\cdots \sigma^k = \epsilon$,
completing the proof.
\end{proof}

\begin{corollary}
The Hilbert series for the $S_n$-invariants of $\bH_n^{\otimes k}$ is given by 
$$
\langle \mathcal{F} \bH_n^{\otimes k}, s_n \rangle = \sum\limits_{\sigma^1\cdots  \sigma^{k } = \epsilon } q^{\comaj(\sigma^1)+ \cdots + \comaj(\sigma^{k})   }.
$$
\end{corollary}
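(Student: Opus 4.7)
The plan is to obtain the corollary as a direct specialization of the theorem just proved, together with the formula derived in Section~2 that relates the inner product $\langle \mathcal{F}\bH_n^{\otimes k}, s_\lambda\rangle$ to a plethystic evaluation of $s_\lambda$. There is essentially no new combinatorial content to uncover here; the point is just to identify the $s_n$ component with the Hilbert series of invariants and to unwind the notation.

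First I would recall from Section~2 the identity
\[
\langle \mathcal{F} \bH_n^{\otimes k}, s_\lambda\rangle = (q;q)_n^k\, s_\lambda\!\left[\frac{1}{(1-q)^k}\right],
\]
specialized at $\lambda = (n)$. Since $s_{(n)} = h_n$, the right-hand side becomes $(q;q)_n^k\, h_n[1/(1-q)^k]$. Next I would apply the theorem immediately preceding the corollary, which gives
\[
h_n\!\left[\frac{1}{(1-q_1)\cdots(1-q_k)}\right] = \frac{\sum_{\sigma^1\cdots\sigma^k = \epsilon} q_1^{\comaj(\sigma^1)}\cdots q_k^{\comaj(\sigma^k)}}{(q_1;q_1)_n\cdots(q_k;q_k)_n},
\]
and set $q_1 = \cdots = q_k = q$. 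The denominators then cancel against the prefactor $(q;q)_n^k$, and the exponents in the numerator collapse to $\comaj(\sigma^1) + \cdots + \comaj(\sigma^k)$, producing exactly the claimed formula.

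To justify calling this a Hilbert series of invariants, I would remark that the multiplicity of $s_n$ in the Frobenius characteristic counts copies of the trivial $S_n$-representation, whose graded dimension is the Hilbert series of the $S_n$-invariant subspace of $\bH_n^{\otimes k}$. Since the only real step is specialization, the main (minor) thing to be careful about is making sure that the specialization $q_i \mapsto q$ is compatible with the cancellation of $(q_i;q_i)_n$ factors and that the identification $s_{(n)} = h_n$ is invoked correctly; neither presents any genuine obstacle.
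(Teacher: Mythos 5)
Your proposal is correct and is precisely the argument the paper intends: the corollary follows by specializing $q_1=\cdots=q_k=q$ in the theorem immediately preceding it and invoking the identity $\langle \mathcal{F}\bH_n^{\otimes k}, s_\lambda\rangle = (q;q)_n^k\, s_\lambda[1/(1-q)^k]$ from Section~2, with $s_{(n)}=h_n$. The paper states the corollary without proof for exactly this reason, and your remark identifying the $s_n$-multiplicity with the graded dimension of the invariant subspace is the right justification for calling it a Hilbert series.
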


\section{Acknowledgements}
We must thank Nolan Wallach for suggesting the problem and for the numerous, helpful conversations. Also, thank you to Brendon Rhoades and Dun Qiu for the helpful comments and suggestions. The author was partially supported by the University of California President's Postdoctoral Fellowship.

\bibliographystyle{abbrv}
\bibliography{stabilizations}

{\small
  \noindent
  \\
  Marino Romero\\
  University of California, San Diego\\
  Department of Mathematics\\
  {\em E\--mail}: \texttt{mar007@ucsd.edu}
}

\end{document}